\numberwithin{equation}{section}
\theoremstyle{plain}
\newtheorem{theorem}{Theorem}[section]
\newtheorem{proposition}[theorem]{Proposition}
\newtheorem{lemma}[theorem]{Lemma}
\newtheorem{definition}[theorem]{Definition}
\newtheorem{remark}[theorem]{Remark}
\newcommand{\bfx}{{\mathbf x}}
\newcommand{\bfo}{{\mathbf o}}
\newcommand{\bfR}{{\mathbf R}}
\newcommand{\bfZ}{{\mathbf Z}}
\newcommand{\barj}{{\overline j}}
\newcommand{\barpartial}{{\overline \partial}}
\newcommand{\mapright}[1]{\smash{\mathop{   \hbox to 0.7cm{\rightarrowfill}}
  \limits^{#1}}}
\newcommand{\Ric}{\operatorname{Ric}}
\newcommand{\Fut}{\mathrm{Fut}}
\newcommand{\Vol}{\mathrm{Vol}}
\title{Moment polytopes on Sasaki manifolds and volume minimization}
\author{Akito Futaki}
\address{Yau Mathematical Sciences Center, Tsinghua University, Haidian district, Beijing 100084, China}
\email{futaki@tsinghua.edu.cn}
\date{January 26, 2022}
\begin{document}

\begin{abstract}
We show that transverse coupled K\"ahler-Einstein
metrics on toric Sasaki manifolds arise as a critical point of a volume functional. As a preparation for the proof, 
we re-visit the transverse moment polytopes and contact moment polytopes 
under the change of Reeb vector fields. Then we apply it to a coupled version of the volume minimization by Martelli-Sparks-Yau. This is done assuming the Calabi-Yau condition of the K\"ahler cone, and the non-coupled case leads to 
a known existence result of a transverse K\"ahler-Einstein metric and a Sasaki-Einstein metric, but the coupled case requires an assumption related to Minkowski sum to obtain transverse coupled K\"ahler-Einstein metrics.
\end{abstract}

\maketitle

\section{Introduction}
Sasaki-Einstein metrics drew much attention from theoretical physics and mathematics during last two decades.
The first breakthrough was an irregular example found in physics literature 
by Gauntlett, Martelli, Sparks and Waldrum
\cite{GMSW04}. Then in the toric case the existence was proven in our paper 
\cite{FOW} using volume minimization
of Martelli, Sparks and Yau
\cite{MSY1}, \cite{MSY2}. More recently it has been shown that the existence is equivalent
to a notion called $K$-stability by Collins and Sz\'ekelyhidi
\cite{CollinsSzeke18JDG}, \cite{CollinsSzeke19GT}. Sasaki
manifolds are characterized by two K\"ahler structures, one on the Riemannian cone and 
the other 
on the local orbit spaces of the one parameter group of transformations, which we call the {\it Reeb flow}, 
generated by the Reeb vector field.
In fact, the existence of a Sasaki-Einstein metric is equivalent to the existence of a Ricci-flat K\"ahler metric on
the K\"ahler cone, and also equivalent to the existence of a transverse K\"ahler-Einstein metric
of positive scalar curvature on local orbit spaces of the Reeb flow. Therefore there are two possible 
extensions of these studies, one on the K\"ahler cone and the other on the K\"ahler local orbit
spaces of the Reeb flow. In \cite{deBorbonLeg20}, de Borbon and Legendre used
the volume minimization argument to prove the existence on toric K\"ahler cone manifolds of Ricci-flat K\"ahler cone metrics with
cone angle along the boundary invariant divisors without assuming the Calabi-Yau condition
of the K\"ahler cone. This Calabi-Yau condition will be explained in the paragraph 
after Proposition \ref{thm2} below. 
The purpose of this paper is to study the possibility to prove the existence on toric Sasaki manifolds of
transverse coupled K\"ahler-Einstein metrics in the sense of Hultgren and Witt Nystrom
\cite{HultgrenWittNystrom18} assuming the Calabi-Yau condition of the K\"ahler cone
by using the volume minimization argument of Martelli-Sparks-Yau.
Our study shows that 
the non-coupled transverse K\"ahler-Einstein metric recovers the toric Sasaki-Einstein metrics as in \cite{FOW}
but the coupled case requires an additional Minkowski sum assumption to obtain transverse coupled K\"ahler-Einstein metrics.

The transverse coupled K\"ahler-Einstein metrics are defined as follows. 
A Sasaki manifold $S$ is determined by contact distribution $D$, pseudo-convex $CR$-structure $J$ on $D$
and Reeb vector field $\xi$. The pseudo-convex $CR$-structure determines K\"ahler structures on
the local orbit spaces of the Reeb flow. Differential forms on $S$ 
obtained by pulling back from those local orbit spaces are called {\it basic forms}. Naturally
$\partial$ and $\barpartial$ operators can be considered to operate on basic forms, which we denote by
$\partial_B$ and $\barpartial_B$, and we obtain Dolbeault theory, Hodge theory and Chern-Weil theory for basic
forms.
Suppose that the basic first Chern class $c_1^B(S)$ is positive, i.e. represented by a real closed positive
$(1,1)$-basic form, and that we are given
a decomposition
\begin{equation}\label{decomp}
 2\pi c_1^B(S) = \gamma_1 + \cdots + \gamma_k
\end{equation}
of $2\pi c_1^B(S)$ into a sum of basic K\"ahler classes $\gamma_\alpha$. 
Basic K\"ahler metrics $\omega_\alpha \in \gamma_\alpha$ are called
{\it transverse coupled K\"ahler-Einstein metrics} if 
\begin{equation}\label{coupled}
\rho^T(\omega_1) = \cdots = \rho^T(\omega_k) = \sum_{\beta=1}^k \omega_\beta
\end{equation}
where
$$ \rho^T(\omega_\alpha) = - i \partial_B \barpartial_B \log \omega_\alpha^m$$
is the transverse Ricci form of $\omega_\alpha$. Naturally, by the Chern-Weil theory,
$$2\pi c_1^B(S) = [\rho^T(\omega_\alpha)]_B$$ 
where $[\cdot]_B$ denotes a basic cohomology class.

As a preparation, 
we study the relation of transverse moment map image and the contact moment map image, 
and how 
the decompositions of the basic first Chern class induce the Minkowski sum decompositions of the image of the contact moment map. 
The linkage of transverse moment map and the contact moment map is played by the conditions
$c_1^B(S) > 0$ and $c_1(D) =0$ where $D$ is the contact distribution with complex structure $J$.
As will be shown in Lemma \ref{SE2} these two conditions imply that 
$$c_1^B(S) = \tau [d\eta_\xi]_B$$
for some positive constant $\tau$ where $\eta_\xi$ is the contact form with respect to the Reeb vector
field $\xi$. Since the transverse moment map is with respect to the basic K\"ahler class
$c_1^B(S)$ and the contact moment map is with respect to the contact form $\eta_\xi$ we can compare
the two moment maps.
The result we obtain about the comparison
is stated as follows.
\begin{proposition}\label{thm2}
Let $(S,D,J,\xi)$ be a Sasaki manifold such that $c_1^B(S) > 0$ and $c_1(D) = 0$. Suppose that a real compact torus $T$ acts effectively on $S$ preserving
$(D,J,\xi)$ and that the Lie algebra $\mathfrak t$ of $T$
contains $\xi$ (but we do not need to assume $S$ is toric in this proposition).
Suppose also that we are given a decomposition \eqref{decomp}.
\begin{enumerate}
\item[(1)]
There is a unique point $\bfo$, which we call {\it the origin}, in the image 
$\mathcal P_\xi \subset \{p \in \mathfrak t^\ast\,|\, \langle p, \xi \rangle = 1\}$
of the contact moment map and a Minkowski sum decomposition
\begin{equation}\label{Mink0}
 \mathcal P_\xi = \mathcal P_{\xi,1} + \cdots + \mathcal P_{\xi,k}
 \end{equation}
into the sum of convex polytopes $\mathcal P_{\xi,\alpha} \subset \mathcal P_\xi,$
where we regard the hyperplane $\{p \in \mathfrak t^\ast\,|\, \langle p, \xi \rangle = 1\}$
as a vector space by choosing the origin $\bfo$ to be zero,
such that if there are transverse coupled K\"ahler-Einstein metrics
then the sum of the barycenters of $\mathcal P_{\xi,\alpha}$ lies at the origin $\bfo$.
\item[(2)] The Minkowski sum decomposition in {\rm (1)} is unique up to translations
of $\mathcal P_{\xi,\alpha}$ to $\mathcal P_{\xi,\alpha} + c_\alpha$ with $c_\alpha \in \mathfrak t^\ast$
such that $\sum_{\alpha=1}^k c_\alpha= \bfo$.
\item[(3)]
The Minkoswski sum decomposition of $\mathcal P_\xi$ in {\rm (1)} determines a Minkowski sum decomposition of the contact moment cone $\mathcal C_\xi$ 
\begin{equation}\label{Mink1} \mathcal C_\xi = \mathcal C_{\xi,1} + \cdots + \mathcal C_{\xi,k} \end{equation}
into the sum of cones $\mathcal C_{\xi,\alpha} \subset \mathfrak t^\ast$ in such a way that the intersection of $\mathcal C_{\xi,\alpha}$ with $\mathcal P_\xi$ is $\mathcal P_{\xi,\alpha}$.
\end{enumerate}
\end{proposition}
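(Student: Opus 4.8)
The plan is to set up explicit coordinates so that each object in the statement becomes a concrete polytope/cone, then extract the Minkowski decomposition from the given decomposition \eqref{decomp} of $2\pi c_1^B(S)$. First I would recall (from Lemma \ref{SE2}, assumed above) that the hypotheses $c_1^B(S)>0$ and $c_1(D)=0$ give $c_1^B(S)=\tau[d\eta_\xi]_B$ for a positive constant $\tau$; after rescaling the contact form I may as well take $2\pi c_1^B(S)=[d\eta_\xi]_B$. This is precisely what makes the transverse moment map (taken with respect to the basic class $2\pi c_1^B(S)$) and the contact moment map (taken with respect to $\eta_\xi$) comparable: the transverse moment polytope $\Delta_\xi$ and the contact moment polytope $\mathcal P_\xi$ differ by the affine embedding of $\mathfrak t^\ast$ (Reeb level $1$) versus the transverse dual $\mathfrak t^\ast/\langle\xi\rangle$-picture, i.e. $\mathcal P_\xi$ sits in the hyperplane $\{\langle p,\xi\rangle=1\}$ and projects isomorphically onto $\Delta_\xi$. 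I would state this identification carefully, since (1) asks for a canonical point $\bfo$ and the rest of the proof is about tracking it.

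Next, for part (1): the given decomposition \eqref{decomp} $2\pi c_1^B(S)=\gamma_1+\cdots+\gamma_k$ into basic Kähler classes induces, by transverse toric Kähler geometry, a decomposition of each $\gamma_\alpha$ by a ``support-function'' description. Concretely, on the transverse side each $\gamma_\alpha$ has a moment polytope $\Delta_{\xi,\alpha}$ and, because the classes add, one gets $\Delta_\xi=\Delta_{\xi,1}+\cdots+\Delta_{\xi,k}$ as a Minkowski sum of polytopes (this is the standard fact that the moment polytope of a sum of torus-equivariant Kähler classes is the Minkowski sum, up to translation). Lifting back to the Reeb-level-$1$ hyperplane, I define $\mathcal P_{\xi,\alpha}$ to be the corresponding lifts, normalized so that $\sum_\alpha \mathcal P_{\xi,\alpha}=\mathcal P_\xi$ as subsets of the affine hyperplane; this forces the sum of the chosen translation vectors to be fixed. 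The origin $\bfo$ is then defined intrinsically: it is the point of $\mathcal P_\xi$ that, under the transverse--Einstein equation, must be the common value related to the Ricci potential — concretely, $\bfo$ is characterized by the condition that when transverse coupled Kähler-Einstein metrics \eqref{coupled} exist, the moment map of the Einstein metric $\omega_\alpha\in\gamma_\alpha$ has barycenter at the translate of $\bfo$ dictated by the normalization, and summing the $k$ equations in \eqref{coupled} (which says $\sum_\beta\omega_\beta=\rho^T(\omega_\alpha)$ represents $2\pi c_1^B(S)$) shows $\sum_\alpha(\text{barycenter of }\mathcal P_{\xi,\alpha})=\bfo$. The main work is to verify that this $\bfo$ is independent of all choices and to pin down why the Einstein condition forces the barycenter identity — this is essentially the toric reformulation of the fact that a (coupled) Kähler-Einstein metric in a Fano class has vanishing Futaki invariant, so the barycenter of the moment polytope sits at the image of the distinguished point coming from $c_1$; I expect this to be the main obstacle, because it requires carefully matching the transverse Futaki/barycenter computation with the contact normalization $\langle p,\xi\rangle=1$.

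For part (2): uniqueness of the Minkowski decomposition up to the stated translations follows from a general indecomposability-type argument. Any two Minkowski decompositions of $\mathcal P_\xi$ whose summands are the moment polytopes of the \emph{same} basic Kähler classes $\gamma_\alpha$ must agree up to translation, because a basic Kähler class on a transverse toric manifold is determined by its moment polytope up to translation (the support function is determined up to an additive linear function, i.e. a translation of the polytope). Imposing $\sum_\alpha\mathcal P_{\xi,\alpha}=\mathcal P_\xi$ with all sets inside the fixed affine hyperplane leaves exactly the freedom $\mathcal P_{\xi,\alpha}\mapsto\mathcal P_{\xi,\alpha}+c_\alpha$ with $\sum c_\alpha=\bfo$ (where $\bfo=0$ under the chosen origin), which is the claim. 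For part (3): the contact moment cone $\mathcal C_\xi$ is the cone over $\mathcal P_\xi$ with apex at the cone point (equivalently $\mathcal C_\xi=\{t\,p : t\ge0,\ p\in\mathcal P_\xi\}$ before fixing $\bfo$ as origin), so I would \emph{define} $\mathcal C_{\xi,\alpha}$ to be the cone over $\mathcal P_{\xi,\alpha}$ sitting inside the level-$1$ hyperplane, i.e. $\mathcal C_{\xi,\alpha}=\{t\,p:t\ge0,\ p\in\mathcal P_{\xi,\alpha}\}$. Then $\mathcal C_{\xi,\alpha}\cap\mathcal P_\xi=\mathcal P_{\xi,\alpha}$ is immediate since $\mathcal P_\xi$ is the slice at level $1$, and $\mathcal C_\xi=\sum_\alpha\mathcal C_{\xi,\alpha}$ follows by taking cones on both sides of \eqref{Mink0}: scaling commutes with Minkowski sum in the sense that $t(\mathcal P_{\xi,1}+\cdots+\mathcal P_{\xi,k})=t\mathcal P_{\xi,1}+\cdots+t\mathcal P_{\xi,k}$, and every ray of $\mathcal C_\xi$ meets the hyperplane $\{\langle p,\xi\rangle=1\}$ exactly once (because $\xi$ lies in the interior of the dual moment cone), so the cone is genuinely the union of scalings of its level-$1$ slice. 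The only subtlety to address is that Minkowski sum and cone formation interact correctly at the apex; I would note that this is where the convexity of each $\mathcal P_{\xi,\alpha}$ (inherited from being a moment polytope) is used, and otherwise the argument is formal.
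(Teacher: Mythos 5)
Your treatment of (1) and (2) follows essentially the same route as the paper: the hypotheses $c_1^B(S)>0$, $c_1(D)=0$ make $c_1^B(S)$ a positive multiple of $[d\eta_\xi]_B$ (Lemma \ref{SE2}), so the contact moment polytope $\mathcal P_\xi$ and the transverse moment polytope are identified by the affine map $\Phi=\mu^T\circ(\mu^{con})^{-1}$ of \eqref{con2}; the Minkowski decomposition is pulled back from the transverse side, where it is the content of \cite{FutakiZhang18} (the normalization \eqref{coupledS4} being equivalent to $\sum_\alpha\mathcal P_\alpha=\mathcal P_{-K_S}$, unique up to translations summing to zero); and the barycenter statement is the vanishing of the coupled Futaki obstruction transported through the affine map. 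One point you should tighten: your ``intrinsic characterization'' of $\bfo$ via what happens \emph{when} coupled K\"ahler--Einstein metrics exist is not a definition (they may not exist). The paper simply sets $\bfo:=\Phi^{-1}(0)$, the preimage of the zero of the transverse moment map, the zero itself being pinned down unconditionally by the normalization \eqref{Fut0.2} (resp.\ \eqref{coupledS4}). With that substitution your argument for (1) and (2) is the paper's argument.

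For (3) there is a genuine gap. ``Taking cones on both sides of \eqref{Mink0}'' conflates two different Minkowski sums: the sum in \eqref{Mink0} is taken in the hyperplane $\{\langle p,\xi\rangle=1\}$ with $\bfo$ as origin, whereas the Minkowski sum of the cones $\mathcal C_{\xi,\alpha}=\{tp: t\ge 0,\ p\in\mathcal P_{\xi,\alpha}\}$ is taken in $\mathfrak t^\ast$ with the apex as origin, and coning does not intertwine them. Concretely, for convex cones with apex at the origin one has $\mathcal C_{\xi,1}+\cdots+\mathcal C_{\xi,k}=\mathrm{conv}\bigl(\bigcup_\alpha\mathcal C_{\xi,\alpha}\bigr)$, whose slice at level $1$ is $\mathrm{conv}\bigl(\bigcup_\alpha\mathcal P_{\xi,\alpha}\bigr)$, not $\mathcal P_{\xi,1}+\cdots+\mathcal P_{\xi,k}$; your identity $t(\mathcal P_{\xi,1}+\cdots+\mathcal P_{\xi,k})=t\mathcal P_{\xi,1}+\cdots+t\mathcal P_{\xi,k}$ only covers the summands $t_1p_1+\cdots+t_kp_k$ with all $t_\alpha$ equal, and even then the two sums differ by the translation $t(k-1)\bfo$. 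The failure is visible already for the decomposition $\gamma_\alpha=\tfrac{2\pi}{k}c_1^B(S)$ for all $\alpha$: then all $\mathcal P_{\xi,\alpha}$ coincide, hence all $\mathcal C_{\xi,\alpha}$ coincide, and $\mathcal C_{\xi,1}+\cdots+\mathcal C_{\xi,k}=\mathcal C_{\xi,1}\neq\mathcal C_\xi$ since $A+A=A$ for a convex cone. What actually survives, and what is used in Section \ref{section5}, is the \emph{definition} of $\mathcal C_{\xi,\alpha}$ as the unique cone over $\mathcal P_{\xi,\alpha}$, for which $\mathcal C_{\xi,\alpha}\cap\mathcal P_\xi=\mathcal P_{\xi,\alpha}$ holds by construction, together with the Minkowski relation read off on characteristic hyperplane slices; a literal identity \eqref{Mink1} of Minkowski sums of cones should not be asserted on the basis of \eqref{Mink0} alone.
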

\noindent
The origin $\bfo$ in fact corresponds to the zero of the transverse moment polytope as the proof shows.

A Sasaki manifold $S$ of dimension $2m+1$ 
is said to be {\it toric} if its K\"ahler cone $C(S)$ is toric. Thus 
a real compact torus $T$ of dimension $m+1$ acts
effectively on $S$ preserving 
the contact distribution $D$, the pseudo-convex $CR$-structure $J$ on $D$
and the Reeb vector field $\xi$, and the Lie algebra $\mathfrak t$ of $T$
contains $\xi$ where the elements of $\mathfrak t$ are identified
with vector fields on $S$. 
Let 
$$ \mathcal C = \{ p \in \mathfrak t^\ast\backslash \{o\}\,|\, \langle p, \ell_a\rangle \ge 0,\ a = 1, \cdots, d\}$$
be the moment cone of $C(S)$, which is a convex rational polyhedral cone, 
where $\ell_a \in \mathfrak t$ such that $2\pi \ell_1, \cdots, 2\pi\ell_d$ are primitive elements of 
the kernel $\Lambda$ of $\exp : \mathfrak t \to T$. 
For a compact toric Sasaki manifold $S$ we have the following equivalent conditions, 
c.f. \cite{CFO}, Theorem 1.2:
\begin{enumerate}
\item $c_1^B(S) > 0$ and $c_1(D) =0$.
\item There is a rational vector $\gamma \in \mathfrak t^\ast$ such that 
$$ \langle\gamma,\xi\rangle = -m-1 \quad \text{and}\quad \ell_a(\gamma) = -1\ \text{for}\ a=1,\cdots, d.$$
\item The power of the canonical line bundle $K^{\otimes\ell}_{C(S)}$ of the cone $C(S)$ is a trivial line bundle for some integer $\ell$. 
\end{enumerate}
Because of (c) we call these equivalent conditions {\it Calabi-Yau condition of the K\"ahler cone}. 
The condition (b) appeared in \cite{MSY1} as (2.57), (2.60).
The existence of $-\gamma$ is also known in algebraic geometry of toric varieties,
see \cite{CoxLittleSchenck}, Theorem 4.2.8. The paper \cite{deBorbonLeg20} also gives an account from
the broader view points of what they call {\it angle cone}.

\begin{theorem}\label{thm3}
Let $S$ be a toric Sasaki manifold satisfying Calabi-Yau condition of the K\"ahler cone. 
Then, in Proposition \ref{thm2}, we can take
$$\bfo =- \frac1{m+1}\gamma.$$ 
\end{theorem}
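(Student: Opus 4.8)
The plan is to identify the origin $\bfo$ in $\mathcal P_\xi$ by tracking precisely what "origin" means in Proposition \ref{thm2}: as the remark following that proposition states, $\bfo$ corresponds to the zero of the transverse moment polytope, i.e. to the image of the moment map associated to the basic K\"ahler class $c_1^B(S)$ after the usual normalization that places the zero of cohomology at $0$. So the first step is to recall the relation between the contact moment map (with respect to the contact form $\eta_\xi$) and the transverse moment map (with respect to $[d\eta_\xi]_B$, hence with respect to $c_1^B(S)$ up to the constant $\tau$ of Lemma \ref{SE2}). Concretely, the contact moment map $\mu_\xi : S \to \mathfrak t^\ast$ has image lying in the affine hyperplane $\{\langle p,\xi\rangle = 1\}$, and composing with a fixed slice identification it differs from the transverse moment map only by an additive constant in $\mathfrak t^\ast$; the origin $\bfo$ is exactly that additive constant, i.e. the point of $\{\langle p,\xi\rangle = 1\}$ that the transverse-moment-map zero maps to.

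Next I would bring in the Calabi-Yau vector $-\gamma$ from condition (b). The key identity to exploit is $c_1^B(S) = \tau[d\eta_\xi]_B$ from Lemma \ref{SE2}: the normalization constant $\tau$ is determined by pairing both sides against $\xi$, and condition (b) gives $\langle\gamma,\xi\rangle = -m-1$. I expect that unwinding the definitions shows the transverse moment polytope for the class $2\pi c_1^B(S)$ is $(m+1)$ times the "Sasaki polytope" slice, and that the facet equations $\ell_a(\gamma) = -1$ pin down the position of that polytope relative to the $\ell_a$-defining hyperplanes of $\mathcal C$. Translating the barycentric/zero-of-cohomology normalization through these relations, the zero of the transverse moment polytope sits at the point of $\{\langle p,\xi\rangle=1\}$ given by $-\tfrac{1}{m+1}\gamma$ — the factor $m+1$ being forced by $\langle -\tfrac{1}{m+1}\gamma,\xi\rangle = \tfrac{1}{m+1}(m+1) = 1$, which is the consistency check that this candidate even lies on the correct slice.

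So the proof structure is: (i) recall from the proof of Proposition \ref{thm2} the formula expressing $\bfo$ as the image under the slice identification of the transverse-moment-map zero; (ii) use Lemma \ref{SE2} and condition (b) to compute $\tau$ and to write the defining data of the transverse polytope in terms of $\gamma$ and the $\ell_a$; (iii) verify $\langle -\tfrac{1}{m+1}\gamma,\xi\rangle = 1$ so the candidate lies on the hyperplane, and (iv) check that $-\tfrac{1}{m+1}\gamma$ satisfies the same characterization that pins down $\bfo$, i.e. that it is the distinguished point making the relevant potential/cohomology normalization vanish. Since uniqueness of $\bfo$ is already part of Proposition \ref{thm2}(1), step (iv) finishes the argument once the candidate is shown to have the defining property.

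The main obstacle I anticipate is step (ii)–(iv): correctly matching normalization conventions across the three objects involved — the contact moment map (normalized by $\langle p,\xi\rangle = 1$), the transverse moment map (normalized so that the zero of $c_1^B(S)$ corresponds to $0$), and the algebraic-geometric vector $\gamma$ (normalized by $\ell_a(\gamma) = -1$ and $\langle\gamma,\xi\rangle = -m-1$). In particular one must be careful about the factor of $2\pi$ appearing in \eqref{decomp} and in the definition of $\ell_a$ (the primitive lattice vectors are $2\pi\ell_a$), and about the constant $\tau$ in $c_1^B(S) = \tau[d\eta_\xi]_B$; a sign or factor error there would shift $\bfo$ by a scalar multiple of $\gamma$. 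Once these conventions are aligned, the identity $\bfo = -\tfrac{1}{m+1}\gamma$ should drop out essentially by the dimension count encoded in $\langle\gamma,\xi\rangle = -(m+1)$, with no hard analysis required.
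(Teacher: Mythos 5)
Your setup is the right one and matches the paper's framework: $\bfo$ is by construction the point $\Phi^{-1}(0)$ for the affine map $\Phi = \mu^T\circ(\mu^{con})^{-1}$ of \eqref{con2}, the contact and transverse moment maps differ by the additive normalization constant coming from \eqref{Fut0.2} (the relation $v = \tfrac{m+1}{2}\eta_\xi(X)+c$), and the candidate $q=-\tfrac{1}{m+1}\gamma$ is the natural one. But the heart of the proof is missing. Your step (iv) — that $q$ "satisfies the same characterization that pins down $\bfo$" — is exactly the nontrivial claim, and you assert it rather than prove it; the statement that the identity "should drop out essentially by the dimension count encoded in $\langle\gamma,\xi\rangle=-(m+1)$, with no hard analysis required" is not correct. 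The pairing $\langle -\tfrac{1}{m+1}\gamma,\xi\rangle=1$ only verifies that the candidate lies on the $m$-dimensional characteristic hyperplane; it is a single linear condition and cannot single out a point of that hyperplane. What actually determines $\bfo$ is the integral normalization \eqref{Fut0.2} over $S$, and what determines $q$ are the $d$ facet conditions $\ell_a(\gamma)=-1$; connecting the two is a genuine computation, not a bookkeeping of conventions.

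The paper closes this gap as follows: it writes $\Fut$ via Donaldson's boundary-integral formula \eqref{q2.1} on $\mathcal P_\xi$, uses the de Borbon--Legendre identity \eqref{q3} expressing the boundary measure $\sigma_\xi$ in terms of the distinguished point $q$ (this is precisely where $\ell_a(\gamma)=-1$ enters, since $\sigma_\xi$ is defined by $d\ell_a\wedge\sigma_\xi=-d\tilde x$ on each facet), and then applies Stokes' theorem to obtain
$$\Fut(X) = -(m+1)\Bigl(\int_{\mathcal P_\xi}\tilde x^i\,d\tilde x \Big/ \int_{\mathcal P_\xi} d\tilde x - q^i\Bigr)\cdot(\text{up to volume normalization}),$$
i.e.\ $\Fut$ vanishes iff the barycenter of $\mathcal P_\xi$ sits at $q$. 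Comparing with \eqref{Fut4}, which says $\Fut$ vanishes iff the barycenter of the transverse polytope sits at $0$, and using that the affine map $\Phi$ takes barycenter to barycenter, gives $\Phi(q)=0$. Some argument of this kind — or an equivalent one via the explicit Guillemin/Abreu symplectic potential and the Ricci potential of a toric transverse K\"ahler metric — is needed to complete your step (iv); without it the proof is incomplete.
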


Using Proposition \ref{thm2} and Theorem \ref{thm3} we apply the volume minimization argument of
Martelli-Sparks-Yau in the following setting.
Let $S$ be a toric Sasaki manifold satisfying Calabi-Yau condition of the K\"ahler cone.
We regard
\begin{eqnarray}
 \Xi_\bfo 
&:=& \{ \xi' \in \mathcal C^\ast \subset \mathfrak t\ |\ \langle \xi',\bfo \rangle = 1 \} \\
&=& \{ \xi' \in \mathcal C^\ast \subset \mathfrak t\ |\ \langle \xi',\gamma \rangle = -m-1 \}\nonumber
\end{eqnarray}
as the space of Reeb vector fields satisfying the Calabi-Yau conditions of the K\"ahler cone. 


Let $\gamma_1,\ \cdots,\ \gamma_k$ be basic K\"ahler classes with respect to the Reeb vector field $\xi$. But we do not assume
$ c_1^B(S) = (\gamma_1 + \cdots + \gamma_k)/2\pi$
for the moment.
Let $\mathcal P_{\xi,1},\ \cdots,\ \mathcal P_{\xi,k}$ be compact convex polytopes 
corresponding to $\gamma_1,\ \cdots,\ \gamma_k$, which are assumed to be subsets 
in the contact moment polytope $P_{\xi}$ of $S$, 
and $\mathcal C_{\xi,1},\ \cdots,\ \mathcal C_{\xi,k}$ be convex polyhedral cones in the contact moment
convex cone $\mathcal C_\xi$ of the K\"ahler cone $C(S)$ of $S$
such that $\mathcal P_{\xi,\alpha} = \mathcal C_{\xi, \alpha} \cap P_{\xi}$. 

Choose $\xi' \in \Xi_\bfo$, and set for $\alpha = 1,\ \cdots,\ k$
$$ \mathcal P_{\xi'} = \{ p \in \mathcal C_\xi \ |\ \langle \xi', p \rangle = 1\}, $$
$$ \mathcal P_{\xi',\alpha} = \mathcal C_{\xi, \alpha} \cap P_{\xi'}, $$
$$ \Delta_{\xi',\alpha} = \{ p \in \mathcal C_{\xi, \alpha} \ |\ \langle \xi', p \rangle \le 1\}.$$
We now consider the functional 
$W :  \Xi_\bfo \to \bfR$ defined by
\begin{eqnarray}
 W(\xi') &:=& \sum_{\alpha=1}^k \log \frac{\Vol(\mathcal P_{\xi',\alpha})}{|\xi'|}\\
 &=& \sum_{\alpha=1}^k \log ((m+1) \Vol(\Delta_{\xi'})).
\end{eqnarray}

\begin{theorem}\label{thm1}
Let $S$ be a toric Sasaki manifold with Calabi-Yau condition of the K\"ahler cone,  i.e. 
$c_1^B(S) > 0$ and $c_1(D) = 0$. 
\begin{enumerate}
\item[(1)] $W$ is a strictly convex function on $\Xi_\bfo$.
\item[(2)] If we have a critical point $\xi' \in \Xi_{\bfo}$ such that
$$\mathcal P_{\xi'} = \mathcal P_{\xi',1}+\ \cdots +\ \mathcal P_{\xi',k}$$
then there exist transverse couple K\"ahler-Einstein metrics with respect to $\xi'$.
\item[(3)] In the case of $k=1$, if we take $\gamma_1 = c_1^B(S)$ and $\mathcal P_{\xi,1} = \mathcal P_\xi$ then
we have $\mathcal P_{\xi',1} = \mathcal P_{\xi'}$ for any $\xi' \in \Xi_\bfo$, and thus the
assumption in (2) is always satisfied. Further, the functional $W$ is a strictly convex
proper function and always have a critical point.
\end{enumerate}
\end{theorem}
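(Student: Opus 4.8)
The plan is to reduce everything to the Laplace-transform description of cone volumes, as in Martelli--Sparks--Yau, applied one Minkowski summand at a time. First I would record the elementary identity, valid for any full-dimensional convex polyhedral cone $\mathcal C' \subset \mathfrak t^\ast$ and any $\xi'$ interior to its dual cone,
\[
 (m+1)\,\Vol(\{x \in \mathcal C' \mid \langle \xi', x\rangle \le 1\}) \;=\; \frac1{m!}\int_{\mathcal C'} e^{-\langle \xi', x\rangle}\, dx ,
\]
obtained by the radial substitution $x = r\theta$, $r = \langle\xi',x\rangle$, $\theta$ in the slice $\{\langle\xi',\cdot\rangle = 1\}$; the same cone-over-a-base computation gives $\Vol(\mathcal P_{\xi',\alpha})/|\xi'| = (m+1)\Vol(\Delta_{\xi',\alpha})$, so the two formulas for $W$ agree and $W(\xi') = \sum_\alpha \log\big(\tfrac1{m!}\int_{\mathcal C_{\xi,\alpha}} e^{-\langle\xi',x\rangle}\,dx\big)$. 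For (1), each summand is the logarithm of the Laplace transform of Lebesgue measure on the cone $\mathcal C_{\xi,\alpha}$, which is full-dimensional because $\gamma_\alpha$ is a (positive) basic K\"ahler class; such a function of $\xi'$ is convex since $e^{-\langle\xi',x\rangle}$ is log-linear in $\xi'$, and strictly convex because its Hessian at $\xi'$ is the covariance matrix of $x$ for the probability measure proportional to $e^{-\langle\xi',x\rangle}dx$, which is positive definite as $\mathcal C_{\xi,\alpha}$ spans $\mathfrak t^\ast$. Hence $W$ is strictly convex on the open dual cone, a fortiori on $\Xi_\bfo$.

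For (2), differentiating under the integral sign gives $d_{\xi'}\log\int_{\mathcal C_{\xi,\alpha}}e^{-\langle\xi',x\rangle}dx = -\,\mathbb E_\alpha[x]$, and the same radial computation identifies $\mathbb E_\alpha[x] = (m+1)\,\mathrm{bar}(\mathcal P_{\xi',\alpha})$, so $d_{\xi'}W = -(m+1)\sum_\alpha \mathrm{bar}(\mathcal P_{\xi',\alpha}) \in \mathfrak t^\ast$. Since $T_{\xi'}\Xi_\bfo = \{v \in \mathfrak t \mid \langle v,\bfo\rangle = 0\}$, a critical point of $W$ on $\Xi_\bfo$ means $d_{\xi'}W \in \mathbb R\,\bfo$; pairing with $\xi'$ and using $\langle\xi',\mathrm{bar}(\mathcal P_{\xi',\alpha})\rangle = 1 = \langle\xi',\bfo\rangle$ fixes the scalar, so at a critical point $\sum_\alpha \mathrm{bar}(\mathcal P_{\xi',\alpha}) = k\,\bfo$. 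I would then translate each polytope by $-\bfo$: in the hyperplane $\{\langle\xi',\cdot\rangle = 1\}$, regarded as in Proposition \ref{thm2} as a vector space with $\bfo$ as its zero, the slices $\mathcal P_{\xi',\alpha} - \bfo$ are the transverse moment polytopes of the (transported) classes $\gamma_\alpha$ with respect to $\xi'$; the assumption $\mathcal P_{\xi'} = \mathcal P_{\xi',1} + \cdots + \mathcal P_{\xi',k}$, read in this vector space, says precisely that $2\pi c_1^B(S,\xi') = \gamma_1 + \cdots + \gamma_k$ is a genuine decomposition into basic K\"ahler classes for $\xi'$, while the critical-point identity becomes $\sum_\alpha \mathrm{bar}(\mathcal P_{\xi',\alpha} - \bfo) = 0$. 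These two statements are exactly the hypotheses under which the toric coupled transverse complex Monge--Amp\`ere system on $(S,D,J,\xi')$ is solvable: reduce it, as usual in the toric case, to a real Monge--Amp\`ere system on the polytopes and run the variational method, the vanishing of the barycenter sum supplying coercivity of the relevant functional; the minimizer then gives, by toric regularity, smooth basic K\"ahler metrics $\omega_\alpha \in \gamma_\alpha$ satisfying \eqref{coupled}.

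For (3), taking $\gamma_1 = c_1^B(S)$ and $\mathcal P_{\xi,1} = \mathcal P_\xi$ forces $\mathcal C_{\xi,1} = \mathcal C_\xi$, hence $\mathcal P_{\xi',1} = \mathcal C_\xi \cap \mathcal P_{\xi'} = \mathcal P_{\xi'}$ for every $\xi' \in \Xi_\bfo$, so the single-summand Minkowski hypothesis of (2) is automatic. Then $W(\xi') = \log((m+1)\Vol(\Delta_{\xi'}))$ is, up to an additive constant, the logarithm of the Martelli--Sparks--Yau volume functional, and is strictly convex by (1). For properness, $\bfo$ lies in the interior of the moment cone $\mathcal C$ --- indeed $\langle\bfo,\ell_a\rangle = 1/(m+1) > 0$ by Theorem \ref{thm3} --- so $\Xi_\bfo$ is bounded, and as $\xi'$ approaches the relative boundary of $\Xi_\bfo$, i.e. $\partial\mathcal C^\ast$, the region $\Delta_{\xi'}$ becomes unbounded and $\Vol(\Delta_{\xi'}) \to \infty$, whence $W \to +\infty$. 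Thus the sublevel sets of $W$ are compact in the convex open set $\Xi_\bfo$, so $W$ attains its infimum at a unique interior critical point. Combining with (2), this critical $\xi'$ carries a transverse K\"ahler--Einstein metric of positive transverse scalar curvature, i.e. a Sasaki--Einstein metric, recovering \cite{FOW}.

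The step I expect to be the main obstacle is the second half of (2): matching the combinatorial output of the volume minimization --- the value $k\bfo$ of the barycenter sum, the factor $m+1$, and the Minkowski decomposition normalized at $\bfo$ --- with the precise solvability hypotheses for the toric coupled transverse K\"ahler--Einstein equation, and in particular verifying that $\bfo$ is the common origin of the transverse moment polytopes after the change of Reeb vector field; this is exactly where Proposition \ref{thm2} and Theorem \ref{thm3} are used. By contrast, (1) and (3) are the classical Martelli--Sparks--Yau convexity-and-properness package, here applied summand by summand.
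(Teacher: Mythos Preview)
Your argument is correct and follows the same overall architecture as the paper: compute the first variation of $W$ to identify the critical-point condition with the vanishing of the (translated) barycenter sum, verify strict convexity via the second variation, and then invoke the known toric existence theorem for coupled transverse K\"ahler--Einstein metrics. The paper cites \cite{FutakiZhang18}, Theorem~1.4, for this last step rather than sketching the variational method, but that is the same result you are appealing to.

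The one genuine methodological difference is in how convexity is established. The paper works directly with the volume, proving two preparatory propositions that give closed formulas
\[
\left.\frac{d}{dt}\Vol(\xi_t)\right|_{t=0} = -\frac{m+1}{|\xi'|}\int_{\mathcal P_{\xi'}}\langle p,\nu\rangle\,d\sigma,
\qquad
\left.\frac{d^2}{dt^2}\Vol(\xi_t)\right|_{t=0} = \frac{(m+1)(m+2)}{|\xi'|}\int_{\mathcal P_{\xi'}}\langle p,\nu\rangle^2\,d\sigma,
\]
and then obtains strict convexity of each $\log\Vol$ summand from the Cauchy--Schwarz inequality. You instead pass through the Laplace-transform identity $(m+1)\Vol(\Delta_{\xi',\alpha}) = \tfrac{1}{m!}\int_{\mathcal C_{\xi,\alpha}} e^{-\langle\xi',x\rangle}\,dx$ and read off strict convexity of $\log$ of a Laplace transform as positive-definiteness of a covariance matrix. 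Both computations are in the Martelli--Sparks--Yau toolkit and are essentially equivalent; the paper's explicit derivative formulas have the mild advantage of making the barycenter interpretation in (2) immediate without the intermediate expectation calculation, while your route makes strict convexity conceptually transparent without tracking the constants $(m+1)(m+2)$. Your treatment of (3), including the observation that $\bfo$ lies in the interior of $\mathcal C$ so that $\Xi_\bfo$ is a bounded slice, matches the paper's and is in fact slightly more explicit about why properness holds.
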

The part (3) is due to Martelli-Sparks-Yau \cite{MSY1, MSY2}, and the part (2) is an attempt
to extend their argument to the coupled case. However, even if we assume 
$ c_1^B(S) = (\gamma_1 + \cdots + \gamma_k)/2\pi$
and 
$\mathcal P_\xi = \mathcal P_{\xi,1} + \cdots + \mathcal P_{\xi,k}$
it is not clear whether
$\mathcal P_\xi' = \mathcal P_{\xi',1} + \cdots + \mathcal P_{\xi',k}$
for other $\xi' \in \Xi_\bfo$, and can not conclude the existence of transverse coupled 
K\"ahler-Einstein metrics. In the last section, this will be explained using the $CR$ $f$-twist
of Apostolov-Calderbank \cite{ApoCal21MA}.

The volume minimization arguments were used for the studies of non-linear problems extending
K\"ahler-Einstein metrics in which Killing vector fields or Killing potentials are involved:
K\"ahler-Ricci solitons \cite{TianZhu02}, Sasaki-Einstein metrics \cite{MSY1, MSY2} and
conformally K\"ahler, Einstein-Maxwell metrics \cite{FO17}, see also the survey \cite{FO_ICCM_Notices19}.
In all these cases, a volume functional is defined on the space of Killing vector fields, and
the derivative is an obstruction to the existence of those metrics which extends the obstruction
to the existence of K\"ahler-Einstein metrics \cite{futaki83.1}.

After this introduction, the plan of this paper is as follows. In section 2 we review basic facts about
Sasaki manifolds. In section 3 we review known facts
on transverse K\"ahler geometry and the transverse moment map. In section 4, we map a Minkowski
sum decomposition of the image of the transverse
moment map to the image of the contact moment map to obtain a Minkowski sum decomposition of
of the contact moment map image. The properties of the latter is stated as Proposition \ref{thm2}.
Then we prove Theorem \ref{thm3}.
In section 5 we use the volume minimization argument and prove Theorem \ref{thm1}. 

\section{Deformations of Sasakian structures}
Let $S$ be a $(2m+1)$-dimensional smooth manifold. A 
 {\it contact structure} on $S$ is a $2m$-dimensional distribution $D \subset TS$
 such that the {\it Levi form} $L_D : D \times D \to TS/D$ defined by
 $$ L_D(X,Y) = - \eta_D([X,Y])$$
 is non-degenerate where $\eta_D : TS \to TS/D$ is the projection. The pair $(S,D)$ is called a {\it contact
 manifold}, and $D$ is also called the {\it contact distribution}. We assume $TS/D$ is an oriented real
 line bundle. If $\tau$ is a positive section of $TS/D$, then $\eta_\tau = \tau^{-1} \eta_D$ is a contact
 form, i.e. $d\eta_\tau|_D$ is non-degenerate. Then there is a unique vector field $\xi$, called the 
 {\it Reeb vector field}, such that
 $$ i(\xi) \eta_\tau = 1, \qquad\qquad i(\xi) d\eta_\tau = 0 $$
 where $i(\xi)$ denotes the inner product by $\xi$. 
 In this case $\eta_D(\xi) = \tau$. The flow on $S$ generated by $\xi$, i.e. the one parameter group
 of transformations generated by $\xi$, is called the {\it Reeb flow}. 
 Since $i(\xi) d\eta_\tau = 0$ and $L_\xi d\eta_\tau = 0$ where $L_\xi$ denotes the Lie derivative 
 by $\xi$, then $d\eta_\tau$ descends to a symplectic form on local orbit spaces of the Reeb flow.
 Let $\Omega^k_B(S)$ denote the set of all $k$-forms on $S$ which are obtained by pulling back from the local
 orbit spaces of the Reeb flow. We call such forms {\it basic $k$-forms} with respect to $\xi$. Obviously
 a $k$-form $\alpha$ on $S$ belongs to
 $\Omega_B^k(S)$ if and only if $i(\xi)\alpha = 0$ and $L_\xi \alpha = 0$. 
 The $2$-form $d\eta_\tau$ is a typical example of a basic $2$-form.
 
 A vector field $X$ on $S$ is said to be a {\it contact vector field} if $L_X C^\infty(D) \subset
 C^\infty(D)$. 
 \begin{lemma}\label{contact1}
 Sending a contact vector field $X$ to $\eta_D(X) \in C^\infty(TS/D)$ gives an isomorphism
 between the Lie algebra of contact vector fields and $C^\infty(TS/D)$.
 If $\sigma = f \tau \in C^\infty(TS/D)$ for a smooth function $f \in C^\infty(S)$, then 
 the corresponding contact vector field $X$ with $\eta_D(X) = \sigma$ is expressed as 
 \begin{equation}\label{contact2}
  X = f\xi + K_f
   \end{equation}
 for $K_f \in C^\infty(D)$ satisfying
 \begin{equation}\label{contact3}
 i(K_f) d\eta_\tau|_D = - df|_D
 \end{equation}
 \end{lemma}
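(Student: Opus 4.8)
The plan is to reinterpret the contact condition as an equation for $L_X\eta_\tau$ and then to solve it explicitly by splitting $X$ into its $\xi$-component and its $D$-component, the latter being recovered by inverting the non-degenerate form $d\eta_\tau|_D$.

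\emph{Reformulating the contact condition.} First I would observe that a vector field $X$ is contact if and only if $L_X\eta_\tau = g\,\eta_\tau$ for some $g \in C^\infty(S)$: for $Y \in C^\infty(D)$ one has $(L_X\eta_\tau)(Y) = X(\eta_\tau(Y)) - \eta_\tau([X,Y]) = -\eta_\tau([X,Y])$ since $\eta_\tau(Y)=0$, so the requirement $L_X C^\infty(D)\subset C^\infty(D)$ is exactly the statement that $L_X\eta_\tau$ annihilates $D = \ker\eta_\tau$. By Cartan's formula, for any $X$ with $\eta_\tau(X)=f$ one has $L_X\eta_\tau = i(X)\,d\eta_\tau + df$; this identity will be the workhorse for both existence and uniqueness below.

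\emph{Existence and the formula.} Given $\sigma = f\tau \in C^\infty(TS/D)$, I would look for $X$ with $\eta_D(X)=\sigma$, equivalently $\eta_\tau(X)=f$, and write $X = f\xi + K$ with $K \in C^\infty(D)$. Since $d\eta_\tau|_D$ is non-degenerate, the bundle map $D \to D^\ast$, $Z \mapsto i(Z)\,d\eta_\tau|_D$, is an isomorphism, so there is a unique $K = K_f \in C^\infty(D)$, depending $\bfR$-linearly and smoothly on $f$, with $i(K_f)\,d\eta_\tau|_D = -df|_D$; this is \eqref{contact3}. For this $X$, using $i(\xi)\,d\eta_\tau = 0$ we get $i(X)\,d\eta_\tau = i(K_f)\,d\eta_\tau$, hence $L_X\eta_\tau = i(K_f)\,d\eta_\tau + df$, which vanishes on $D$ by the choice of $K_f$; therefore $L_X\eta_\tau = g\,\eta_\tau$ with $g = (L_X\eta_\tau)(\xi) = \xi(f)$, again using $i(\xi)\,d\eta_\tau = 0$. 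Thus $X$ is contact with $\eta_D(X) = \eta_\tau(X)\,\tau = f\tau = \sigma$, which proves surjectivity and establishes \eqref{contact2}.

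\emph{Injectivity and the Lie algebra structure.} If $X$ is contact and $\eta_D(X)=0$, then $X \in C^\infty(D)$ and $L_X\eta_\tau = i(X)\,d\eta_\tau = g\,\eta_\tau$; pairing with $\xi$ gives $g = i(\xi)i(X)\,d\eta_\tau = -i(X)i(\xi)\,d\eta_\tau = 0$, so $i(X)\,d\eta_\tau = 0$, and restricting to $D$ forces $X=0$ by non-degeneracy. Since $X \mapsto \eta_D(X)$ is manifestly $\bfR$-linear, it is therefore a linear isomorphism onto $C^\infty(TS/D)$. Finally, the contact vector fields form a Lie algebra under the commutator (the bracket of two vector fields each preserving $D$ again preserves $D$), and transporting this bracket through the isomorphism equips $C^\infty(TS/D)$ with the asserted Lie-algebra structure. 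The whole argument uses only the Reeb identities $i(\xi)\eta_\tau=1$, $i(\xi)\,d\eta_\tau=0$ and the non-degeneracy of $d\eta_\tau|_D$; the latter is the one substantive ingredient, invoked once to produce and pin down $K_f$ and once for injectivity. I expect the only mildly delicate point to be confirming in the existence step that $L_X\eta_\tau$ is a genuine multiple of $\eta_\tau$ and not merely something weaker — this is precisely where $i(\xi)\,d\eta_\tau=0$ enters, both to reduce $i(X)\,d\eta_\tau$ to $i(K_f)\,d\eta_\tau$ and to evaluate the conformal factor $g=\xi(f)$.
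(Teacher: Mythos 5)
Your proof is correct, and it covers more ground than the paper's own proof, which establishes only the identity \eqref{contact3} and refers the reader to Apostolov--Calderbank for the isomorphism statement; you supply the injectivity and surjectivity arguments in full. For \eqref{contact3} itself the two arguments run in opposite directions but rest on the same ingredients: the paper assumes $X$ is contact with $\eta_\tau(X)=f$ and computes $(i(X)\,d\eta_\tau)(Y) = X\eta_\tau(Y)-Y\eta_\tau(X)-\eta_\tau([X,Y]) = -Yf$ for $Y\in C^\infty(D)$, the last term vanishing by the contact condition --- which is exactly your Cartan-formula identity $L_X\eta_\tau = i(X)\,d\eta_\tau + df$ restricted to $D$ --- whereas you define $K_f$ by inverting the non-degenerate form $d\eta_\tau|_D$ and then verify that $f\xi+K_f$ is contact. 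Your constructive direction has the advantage of proving surjectivity at the same time, and your injectivity step (pairing $i(X)\,d\eta_\tau = g\,\eta_\tau$ with $\xi$ to force $g=0$ and then invoking non-degeneracy on $D$) is sound. The only point left implicit, in your write-up as in the lemma statement itself, is which Lie bracket $C^\infty(TS/D)$ is meant to carry; transporting the commutator of contact fields through the bijection, as you do, is all the statement requires.
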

 \begin{proof} We only show \eqref{contact3}. 
 Other part of the proof is left to the reader, or see the proof of Lemma 1 in \cite{ApoCal21MA}, p.1055.
 If $X$ is given by \eqref{contact2} then $\eta_D(X) = f\tau = \sigma$. 
 For $Y \in C^\infty(D)$ we have
 $$ (i(X) d\eta_\tau)(Y) = -Yf$$
 since $L_X C^\infty(D) = C^\infty(D)$. This implies
 $$ (i(X) d\eta_\tau)|D = -df|_D.$$
 On the other hand, using \eqref{contact2} we have 
 $$ i(X) d\eta_\tau = i(K_f) d\eta_\tau.$$
 Then \eqref{contact3} follows from the last two equations.
 \end{proof}
 
 \begin{lemma}\label{contact4}
 In Lemma \ref{contact1}, if $[X,\xi] = 0$ then $f$ is a basic function with respect to $\xi$, i.e. $\xi f = 0$,
 and $K_f$ descends to a Hamiltonian vector field of $f$ on local orbit spaces of the Reeb flow.
 \end{lemma}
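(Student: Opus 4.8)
The plan is to run everything through Cartan's formula together with the two defining relations $i(\xi)\eta_\tau = 1$ and $i(\xi)d\eta_\tau = 0$. From Lemma \ref{contact1} we have $X = f\xi + K_f$ with $K_f \in C^\infty(D)$, and since $K_f$ is a section of $D = \ker\eta_\tau$ we get $\eta_\tau(K_f) = 0$; hence $f = \eta_\tau(X) = i(X)\eta_\tau$, which is the formula I would start from.

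First I would prove $\xi f = 0$. Since $f = i(X)\eta_\tau$ is a function, $\xi f = L_\xi\bigl(i(X)\eta_\tau\bigr) = i([\xi,X])\eta_\tau + i(X)\,L_\xi\eta_\tau$. By Cartan's formula $L_\xi\eta_\tau = i(\xi)d\eta_\tau + d\,i(\xi)\eta_\tau = 0 + d(1) = 0$, and $[\xi,X] = -[X,\xi] = 0$ by hypothesis, so $\xi f = 0$. Thus $f$ is basic with respect to $\xi$ and descends to a function $\overline f$ on the local orbit spaces of the Reeb flow.

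Next I would check that $K_f$ is a foliate vector field, so that it descends as well. Writing $K_f = X - f\xi$ and using the Leibniz rule for the bracket, $[\xi,K_f] = [\xi,X] - \bigl((\xi f)\xi + f[\xi,\xi]\bigr) = 0$, since $[\xi,X]=0$ and $\xi f = 0$. As $K_f \in C^\infty(D)$ is transverse to the flow direction and commutes with $\xi$, it projects to a well-defined vector field $\overline{K_f}$ on the local orbit spaces. Finally, to identify $\overline{K_f}$ as the Hamiltonian vector field of $\overline f$ for the symplectic form $\omega$ to which $d\eta_\tau$ descends, I would observe that $i(K_f)d\eta_\tau$ is basic: $i(\xi)i(K_f)d\eta_\tau = -i(K_f)i(\xi)d\eta_\tau = 0$ and $L_\xi\bigl(i(K_f)d\eta_\tau\bigr) = i([\xi,K_f])d\eta_\tau + i(K_f)L_\xi d\eta_\tau = 0$; likewise $df$ is basic because $\xi f = 0$. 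Since a basic one-form is determined by its restriction to $D$, relation \eqref{contact3} upgrades to the genuine identity $i(K_f)d\eta_\tau = -df$ of basic one-forms on $S$, and pushing this down through the Reeb projection gives $i(\overline{K_f})\omega = -d\overline f$, which is exactly the assertion.

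I do not expect any serious obstacle here; the only points needing a little care are the promotion of the relation restricted to $D$ in \eqref{contact3} to an honest identity of one-forms on $S$ (handled by the remark that both sides are basic), and the fact—already used in the text—that $d\eta_\tau$ descends to a symplectic form on the local leaf space, which is what makes "Hamiltonian vector field of $\overline f$" meaningful.
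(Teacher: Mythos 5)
Your proof is correct and follows essentially the same route as the paper: derive $\xi f = 0$ from $i(\xi)d\eta_\tau = 0$, $i(\xi)\eta_\tau = 1$ and $[X,\xi] = 0$, deduce $[\xi,K_f] = 0$ so that $K_f$ descends, and read off the Hamiltonian property from \eqref{contact3}. Your extra check that both $i(K_f)d\eta_\tau$ and $df$ are basic, so that the identity restricted to $D$ upgrades to an identity of one-forms on $S$ before descending, is a detail the paper leaves implicit but is the same argument.
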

 \begin{proof} One can show that $f$ is basic by 
 $$ 0 = (d\eta_\tau) (X,\xi) = X\eta_\tau(\xi) - \xi (\eta_\tau(X)) - \eta_\tau([X,\xi]).$$
 One can also show $[K_f,\xi]=0$ using \eqref{contact2} together with $[X,\xi]=0$ and $\xi f = 0$.
 Thus $K_f$ descends to the local orbit spaces of the Reeb flow, and \eqref{contact3} shows that $K_f$ is the
 Hamiltonian vector field of $f$.
 \end{proof}
 
 Let $J \in \mathrm{End}(D)$ be an almost complex structure of the contact distribution 
 $D$, i.e. $J^2 = - \mathrm{id}$. We say that $(D,J)$ is a $CR$-structure if
 $$ D^{1,0}: = \{X - iJX\, |\, X \in D\}$$
 is involutive, i.e. the set $C^\infty(D^{1,0})$ of smooth sections of $D^{1,0}$ is closed under the
 bracket.  A $CR$-structure $(D,J)$ is said to be strictly pseudo-convex if $d\eta_\tau(\cdot\,,\, J\cdot)|_D$
 is a positive definite Hermitian form for a positive section $\tau$. Note that this definition is independent of
 the choice of a positive section $\tau$. Then the triple $(S,D,J)$ is called a strictly pseudo-convex 
 $CR$-manifold.
 
 A contact vector field $\xi$ is said to be a {\it $CR$-vector field} if $L_\xi J = 0$.
 
 \begin{definition}\label{Sasaki1}
 If $\xi$ is a $CR$-vector field on a strictly pseudo-convex $CR$-manifold and $\eta_D(\xi)$ gives a
 positive section then we call $(D,J,\xi)$ a Sasaki structure and $(S,D,J,\xi)$ a Sasaki manifold. 
 The $CR$-vector field $\xi$ is called the {\it Reeb vector field} of the Sasaki manifold.
 (This definition of Reeb vector field is compatible with the above definition if we take 
 $\tau = \eta_D(\xi)$.)
 \end{definition}
 
 Since $L_\xi J = 0$ and $D^{1,0}$ is involutive then the local orbit spaces of the Reeb flow have
 a complex structure. Further, for the submersions $\pi_i : U_i \to V_i$ of a small open set $U_i \subset S$
 onto a local orbit space $V_i$, the map
 $$ \pi_i\circ\pi_j^{-1} |_{\pi_j(U_i \cap U_j)} : \pi_j(U_i \cap U_j) \to \pi_i(U_i \cap U_j) $$
 is biholomorphic. The collection of such $(U_i, V_i, \pi_i)$'s is called a {\it transverse holomorphic structure}.
 Further, by the property of strong pseudo-convexity, $\frac12 d \eta_\tau$ descends to $V_i$'s to define 
 K\"ahler forms $\omega_i$'s, and $\pi_i\circ\pi_j^{-1} |_{\pi_j(U_i \cap U_j)}$'s are K\"ahler isometries. We call
 the collection of such $(U_i, V_i, \pi_i, \omega_i)$'s a {\it transverse K\"ahler strucure}.
 
 \begin{remark}\label{rem1}
 The convention of $\frac12 d \eta_\tau$, but not $d \eta_\tau$, is the standard choice of the transverse
 K\"ahler form. This makes $\frac12 i\partial\barpartial r^2$ the K\"ahler form on the cone $C(S)$, see
 the proof of Proposition \ref{Sasaki8}.
 \end{remark}
 
 \begin{lemma}\label{Sasaki2}
 Let $(S, D, J)$ be a strictly pseudo-convex $CR$-manifold. Suppose that $\xi$ and $\xi'$ be two 
 commuting Reeb
 vector fields, i.e. $[\xi,\xi'] = 0$, giving rise to two Sasaki structures $(D,J,\xi)$ and $(D,J,\xi')$ on $S$.
Then $\xi'$ is expressed as 
\begin{equation}\label{Sasaki3}
 \xi' = f\xi + K_f
\end{equation}
where $K_f \in C^\infty(D)$ descends to a Killing vector field on local orbit spaces of the Reeb flow of $\xi$
and $f$ is a positive basic function with respect to $\xi$ which descends to a Killing potential of $K_f$. 
 \end{lemma}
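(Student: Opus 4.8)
The plan is to read the decomposition \eqref{Sasaki3} straight out of Lemma \ref{contact1}, and then to pass to a local orbit space of the Reeb flow of $\xi$ in order to see that the resulting data are \emph{Killing} data, not merely Hamiltonian ones.

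First I would observe that $\xi'$ is a contact vector field for $(S,D)$. Since $(D,J,\xi')$ is a Sasaki structure, $\xi'$ is the Reeb field of the contact form $\eta_{\xi'} := \eta_D(\xi')^{-1}\eta_D$, so $L_{\xi'}\eta_{\xi'} = i(\xi')\,d\eta_{\xi'} + d\,i(\xi')\eta_{\xi'} = 0$, and hence the flow of $\xi'$ preserves $D = \ker\eta_{\xi'}$, i.e. $L_{\xi'}C^\infty(D)\subset C^\infty(D)$. Taking the positive section $\tau := \eta_D(\xi)$, so that $\eta_\tau = \eta_\xi$, and writing the section $\eta_D(\xi') = f\tau$ for a smooth function $f$ (possible since $\tau$ trivializes $TS/D$), Lemma \ref{contact1} applied to $X=\xi'$ gives precisely \eqref{contact2}, namely $\xi' = f\xi + K_f$ with $K_f\in C^\infty(D)$ satisfying \eqref{contact3}. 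Positivity of $f$ is then immediate: $\eta_D(\xi)$ and $\eta_D(\xi')$ are both positive sections of the oriented real line bundle $TS/D$ by Definition \ref{Sasaki1}, so their ratio $f$ is a positive function.

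Next, since $[\xi,\xi'] = 0$, I would invoke Lemma \ref{contact4} with $X=\xi'$. It yields $\xi f = 0$, so $f$ is basic with respect to $\xi$; it yields $[K_f,\xi] = 0$, so $K_f$ is projectable onto each local orbit space $V_i$ of the Reeb flow of $\xi$, say to a vector field $K_{f,i}$; and it identifies $K_{f,i}$ with the Hamiltonian vector field of the descended function $f_i$ relative to the transverse Kähler form $\omega_i$ (the descent of $\tfrac12 d\eta_\xi$). In particular $K_{f,i}$ is a symplectic vector field on $(V_i,\omega_i)$ and $f_i$ is a Hamiltonian for it.

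The remaining point — and the one I expect to be the actual work — is to upgrade $K_{f,i}$ from symplectic to Killing, equivalently to show that $K_{f,i}$ is real holomorphic on $(V_i,J_i)$. The subtlety is that $J$ is defined only on $D$ while $\xi'$ carries a component along $\xi$, so one cannot literally say that $\xi'$ "preserves $J$ on $TS$". I would instead argue with flows: the flow $\varphi_t$ of $\xi'$ preserves $D$ (just shown) and preserves $J$ (because $\xi'$ is a $CR$-vector field, $L_{\xi'}J=0$), and it commutes with the flow of $\xi$ since $[\xi,\xi'] = 0$; hence, for small $t$, $\varphi_t$ descends to a flow $\bar\varphi_t$ on $V_i$, necessarily generated by $K_{f,i}$, with $d\bar\varphi_t\circ d\pi_i = d\pi_i\circ d\varphi_t$ on $D$. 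Because $d\pi_i\colon D\to TV_i$ intertwines $J|_D$ with $J_i$ and is fiberwise surjective, this forces $d\bar\varphi_t\circ J_i = J_i\circ d\bar\varphi_t$, so each $\bar\varphi_t$ is biholomorphic and $L_{K_{f,i}}J_i = 0$. A vector field that is simultaneously symplectic and holomorphic on a Kähler manifold is Killing: from $L_{K_{f,i}}\omega_i = 0$ and $L_{K_{f,i}}J_i = 0$ one gets $L_{K_{f,i}}g_i = L_{K_{f,i}}\bigl(\omega_i(\cdot,J_i\,\cdot)\bigr) = 0$, where $g_i$ is the transverse Kähler metric, positive definite by strict pseudo-convexity. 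Thus $K_f$ descends to a Killing vector field on the local orbit spaces of the Reeb flow of $\xi$, and $f$, being its Hamiltonian via \eqref{contact3}, descends to a Killing potential of it, which is exactly \eqref{Sasaki3}.
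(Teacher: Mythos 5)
Your proof is correct and follows essentially the same route as the paper: the decomposition and the positivity of $f$ come from Lemmas \ref{contact1} and \ref{contact4} exactly as in the text, and your flow argument for $L_{K_f^\vee}J^\vee=0$ is precisely the alternative the paper sketches in one sentence at the end of its proof (the paper's main displayed computation instead verifies $[K_f^\vee,J^\vee Y^\vee]=J^\vee[K_f^\vee,Y^\vee]$ through three explicit bracket identities). Your careful handling of $d\pi_i$ intertwining $J|_D$ with $J_i$ and the closing step ``symplectic plus holomorphic equals Killing'' supplies details the paper leaves implicit, but introduces no new idea beyond what is already there.
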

 \begin{proof}
 A section $Y \in C^\infty(D)$ such that $L_\xi Y = 0$ descends to a 
 vector field on local orbit spaces of the Reeb flow of $\xi$, which we denote by $Y^\vee$.
 For another $W \in C^\infty(D)$ such that $L_\xi W = 0$ we have
 $$ [Y^\vee, W^\vee] = ([Y, W] - \eta_\tau ([Y,W])\xi)^\vee.$$
 By Lemma \ref{contact4} $K_f^\vee$ is a Hamiltonian vector field of the basic function $f$.
 To show that $K_f^\vee$ is a Killing vector field we need to show $L_{K_f^\vee}J^\vee = 0$.
 This is equivalent to
 \begin{equation}
 [K_f^\vee, J^\vee Y^\vee] = J^\vee[K_f^\vee, Y^\vee].
  \end{equation}
 But this follows from the following three equalities:
 $$ ([\xi',JY] - \eta_\tau([\xi',JY]\xi)^\vee = J^\vee[\xi',Y]^\vee,$$
 \begin{eqnarray*}
([f\xi,JY] - \eta_\tau ([f\xi,JY])\xi)^\vee &=& f[\xi,JY] - ((JY)f)\xi - \eta_\tau(f[\xi,JY] - ((JY)f)\xi)\xi\\
&=& (fJ[\xi,Y] - ((JY)f)\xi  + \eta_\tau(((JY)f)\xi)\xi)^\vee\\
&=& (fJ[\xi,Y])^\vee
 \end{eqnarray*}
 and
\begin{eqnarray*}
J^\vee[f\xi,Y]^\vee &=& (J([f\xi,Y] - \eta_\tau([f\xi,Y])\xi))^\vee \\
&=& (J( f[\xi,Y] - (Yf)\xi - \eta_\tau(f[\xi,Y] - (Yf)\xi)\xi)^\vee\\
&=& (fJ[\xi,Y])^\vee.
\end{eqnarray*}
Alternatively, one may simply argue that, since the flow generated by $\xi'$ preserves $J$
and this flow descends to the flow generated by $K_f^\vee$ preserving $J^\vee$,
we obtain $L_{K_f^\vee}J^\vee = 0$. 

 Since both $\eta_\tau(\xi)$ and $\eta_\tau(\xi')$ give positive orientation then
 $$ f = \frac{\eta_\tau(\xi')}{\eta_\tau(\xi)} > 0.$$
 This completes the proof of Lemma \ref{Sasaki2}.
 \end{proof}
 
 Associated with a Sasaki structure we have a Riemannian metric $g$ defined by
 $$ g(\xi,\xi) = 1, \qquad\qquad g(\xi, D) = 0,$$
 and
 \begin{equation}\label{Sasaki4}
 g_D = \frac12 d\eta_\tau (\cdot\,\, J\cdot).
 \end{equation}
 The Riemannian manifold $(S,g)$ associated with the Sasaki Structure $(S,D,J,\xi)$ as above
 is often called a Sasaki manifold. The normalization of $g(\xi,\xi) = 1$ is the standard choice,
 and this choice determines various constants. For example, if $(S,g)$ is an Einstein manifold,
 called a {\it Sasaki-Einstein manifold}, 
 then the Ricci curvature $\mathrm{Ricci}$ satisfies
  \begin{equation}\label{Sasaki5}
 \mathrm{Ricci} = 2m\, g,
 \end{equation}
 and in this case the transverse K\"ahler metric is K\"ahler-Einstein with
 the transverse Ricci curvature $\mathrm{Ricci}^T$ satisfying
  \begin{equation}\label{Sasaki6}
 \mathrm{Ricci}^T = (2m+2)\, g^T
 \end{equation}
 where $g^T$ is the K\"ahler metric on the local orbit spaces of the Reeb flow
 naturally induced from $g_D$, called the transverse K\"ahler metric.
 Note that many authors use different conventions of Ricci curvature
 between Riemannian geometry and K\"ahler geometry
 because the trace is taken respect to an orthonormal basis of the real tangent bundle 
 in Riemannian geometry while holomorphic tangent bundle 
 in K\"ahler geometry, resulting in
K\"ahlerian Ricci curvature being a half of the Riemannian Ricci curvature.
 Ricci curvature in \eqref{Sasaki6} is Riemannian Ricci curvature.
 To distinguish them we denote the K\"ahlerian Ricci curvature by $\Ric$
 while the Riemannian Ricci curvature by $\mathrm{Ricci}$.
 Thus \eqref{Sasaki6} is equivalent to
 \begin{equation}\label{Sasaki7}
 \Ric^T = (m+1)\, g^T.
 \end{equation}
 By the same reason the K\"ahlerian scalar curvature is one fourth of Riemannian
 scalar curvature. In this paper we only deal with the K\"ahlerian transverse scalar
 curvature, which we denote by $R^T$. Thus, for a Sasaki-Einstein manifold,
 we have
 \begin{equation}\label{Sasaki8}
 R^T = m(m+1).
 \end{equation}

 Note also that the associated {\it transverse K\"ahler form} $\omega^T$ is given by
 \begin{equation}\label{Sasaki9}
  \omega^T = \frac12 d\eta_\tau.
 \end{equation}
 As the expression of \eqref{Sasaki9} shows, the transverse K\"ahler form $\omega^T$ is identified with 
 the differential 2-form $\frac12 d\eta_\tau$ on $S$ as a basic 2-form.
 
 Next we see that a Sasaki manifold is obtained as a link of a K\"ahler cone.
 \begin{definition}
 Let $(S,g)$ be a Riemannian manifold. The Riemannian cone $(V, g_V)$
 of $(S,g)$ is the pair of the product manifold $V = \bfR_+ \times S$ and the
 warped product metric
 $$ g_V = dr^2 + r^2g$$
on $V$ where $r$ is the standard coordinate of $\bfR_+$.
 \end{definition} 
 Let $(V,g_V)$ be a Riemannian cone of $(S,g)$ as above. If $(V,g_V)$ is K\"ahler then $(S,g)$ is
 a Sasaki manifold in the following way. We identify $(S,g)$ as the submanifold $\{r=1\}$ in
 $(V,g_V)$. Then 
 $$\eta = d^cr |_{S = \{r=1\}}$$ 
 is a contact form
 where our convention of $d^c$ is
 $$(d^c f)(Y) = df(-JY) = (i(\barpartial - \partial)f)(Y),$$
 $D = \ker \eta$ is a contact distribution, and $\xi := Jr\frac{\partial}{\partial r}|_{r=1}$ is the
Reeb vector field.
 
 \begin{proposition}\label{Sasaki10}
 Let $(S,D,J,\xi)$ be a Sasaki manifold as defined in Definition \ref{Sasaki1}. Then there is a K\"ahler
 cone $(V,g_V)$ such that the Sasakian structure on $\{r=1\} \subset V$ described as above is
 isomorphic to $(S,D,J,\xi)$.
 \end{proposition}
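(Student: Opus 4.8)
The plan is to produce explicitly a K\"ahler cone metric on $V=\bfR_+\times S$ whose Sasakian link at $\{r=1\}$ is the given structure $(S,D,J,\xi)$. Put $\tau=\eta_D(\xi)$ and let $\eta=\eta_\tau$ be the associated contact form, so that $i(\xi)\eta=1$ and $i(\xi)d\eta=0$, and let $g$ be the Sasaki metric determined by \eqref{Sasaki4}. Pull $\eta$, $d\eta$, $\xi$ and the sections of $D$ back to $V$ along the projection $V\to S$, let $r$ be the coordinate on $\bfR_+$, and define an endomorphism $I$ of $TV$ by
\[ I|_D=J,\qquad I\Bigl(r\tfrac{\partial}{\partial r}\Bigr)=\xi,\qquad I\xi=-\,r\tfrac{\partial}{\partial r}. \]
A one-line check gives $I^2=-\mathrm{id}$. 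Set $\omega_V=\tfrac12 d(r^2\eta)$ and $g_V=dr^2+r^2g$; the goal is to show $(V,g_V,I,\omega_V)$ is K\"ahler and restricts correctly.

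First I would verify that $(g_V,I,\omega_V)$ is a compatible triple. Expanding $d(r^2\eta)=2r\,dr\wedge\eta+r^2\,d\eta$ and using $i(\partial/\partial r)\eta=0$, $i(\partial/\partial r)d\eta=0$, $i(\xi)d\eta=0$, $\eta|_D=0$ and $dr|_{TS}=0$, together with $g_D=\tfrac12 d\eta(\cdot\,,J\cdot)$ from \eqref{Sasaki4}, a short computation on the three summands spanned by $r\partial/\partial r$, by $\xi$, and by $D$ shows that $\omega_V(\,\cdot\,,I\,\cdot\,)$ coincides with the symmetric positive-definite tensor $g_V=dr^2+r^2g$; this simultaneously gives positivity and forces $\omega_V$ to be of type $(1,1)$ for $I$. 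Since $\omega_V$ is manifestly exact it is closed, and one also checks $\omega_V=\tfrac14\,d\,d^cr^2$ with the paper's convention $d^cf=df(-I\,\cdot\,)$, matching Remark~\ref{rem1}. Thus, once $I$ is known to be integrable, $(V,g_V,I)$ is a K\"ahler cone over $(S,g)$.

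The heart of the argument, and the step I expect to be the main obstacle, is the integrability of $I$; I would deduce it from the Newlander--Nirenberg theorem by showing that the $(+i)$-eigenbundle $T^{1,0}V$ is involutive. By the definition of $I$, $T^{1,0}V$ is spanned by $D^{1,0}=\{X-iJX:X\in C^\infty(D)\}$ (pulled back to $V$) together with the single field $Z_0:=r\partial/\partial r-i\xi$. Brackets of sections of $D^{1,0}$ remain in $D^{1,0}$ by the $CR$-integrability hypothesis on $(D,J)$. For $X\in C^\infty(D)$ the fields $X$ and $JX$ are $r$-independent, so $[r\partial/\partial r,\;X-iJX]=0$, while $\xi$ being a $CR$-vector field gives $[\xi,X]\in C^\infty(D)$ and $[\xi,JX]=J[\xi,X]$ (this is precisely $L_\xi J=0$), whence
\[ [Z_0,\;X-iJX]\;=\;-\,i\,[\xi,\;X-iJX]\;=\;-\,i\bigl([\xi,X]-iJ[\xi,X]\bigr)\in D^{1,0}. \]
Finally $[Z_0,Z_0]=0$. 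Hence $T^{1,0}V$ is closed under the Lie bracket, $I$ is a genuine complex structure, and $(V,g_V,I,\omega_V)$ is K\"ahler.

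It then remains to identify the Sasakian structure on $\{r=1\}$ produced by the recipe recalled just before the statement. The induced contact form $d^cr|_{r=1}$ vanishes on $D$ and on $r\partial/\partial r$ and sends $\xi$ to $1$, hence equals $\eta$; the induced contact distribution is $\ker\eta=D$ with complex structure $I|_D=J$; and the induced Reeb field is $I(r\partial/\partial r)|_{r=1}=\xi$. Since the Sasaki metric is determined by $(D,J,\xi)$, this identifies the link of $(V,g_V)$ with $(S,D,J,\xi)$, which proves the proposition. Apart from the bracket computation establishing integrability, every step is routine bookkeeping with the identities $i(\xi)d\eta=0$, $i(\partial/\partial r)\eta=i(\partial/\partial r)d\eta=0$, and $L_\xi J=0$.
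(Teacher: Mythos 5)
Your proof is correct and follows essentially the same route as the paper: extend $J$ to the cone by $J(r\partial/\partial r)=\xi$, verify integrability via Newlander--Nirenberg, and identify the K\"ahler form with $\tfrac12 d(r^2\eta)=\tfrac14 dd^c r^2$. The only cosmetic difference is that you check involutivity of the $(1,0)$ tangent bundle while the paper checks that the $(1,0)$ forms generate a differential ideal; these are dual formulations of the same integrability criterion.
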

 \begin{proof}
 Let $(V,g_V)$ be the Riemannian cone of $(S,g)$. We show that the almost complex structure $J$ on
 the $CR$-structure $D$ extends to an integrable complex structure $J$ on $V$ such that $(g_V, J)$
 is K\"ahler. Extend $J$ on $D$ to $TV$ by
 $$ J(r\frac{\partial}{\partial r}) = \xi.$$
Consider $d^cr = dr(-J\cdot)$ with respect to $J$ on $V$. Then one can check 
$$ d^cr |_{\{r=1\}} = \eta_\tau, \qquad \tau = \eta_D(\xi).$$
We extend $\eta_\tau$ to $V$ by
$$ \eta_\tau = d^c \log r. $$
One can then show that $dr + ir\eta_\tau$ and $C^\infty(D^{1,0\, \ast})$ generates sections of the type $(1,0)$
cotangent bundle of $V$ and that they form a differential ideal. Thus $J$ on $V$ is integrable.

Then since $g_V = dr^2 + r^2 g_S$ its fundamental 2-form $\omega_V$ is computed by
\begin{eqnarray*}
\omega_V &=& dr(J\cdot)\wedge dr + r^2 g_S(J,\cdot, \cdot)\\
&=& dr \wedge d^c r +\frac{r^2}2 d\eta_\tau\\
&=& \frac12 \partial\barpartial r^2.
\end{eqnarray*}
This shows that $g_V$ is a K\"ahler metric.
 \end{proof}
 Thus we have an equivalent definition of Sasaki manifolds:
 \begin{definition}\label{Sasaki11}
 An odd dimensional Riemannian manifold $(S,g)$ is called a Sasaki manifold
 if its Riemannain cone $(V,g_V)$ is K\"ahler.
 \end{definition}
 
\section{Deformations of the transverse K\"ahler structures}

As described in the previous section, a Sasakian structure on a differentialble manifold $S$ is
given by the triple of $(D,J,\xi)$ where $(D,J)$ is a strongly pseudo-convex $CR$-structure and
$\xi$ is a Reeb vector field. When we consider deformations of Sasakian structures we may
separate into two types of deformations, one fixing $(D,J)$ and the other fixing $\xi$.
The case fixing $(D,J)$ was already considered in Lemma \ref{Sasaki2}, in which $\xi$ and $\xi'$
are commutative. This leads us to the setting where a compact real torus $T$ of dimension $n$ 
acts effectively on a 
Sasaki manifold $(S,D,J,\xi)$ in such a manner that $T$ preserves $(D,J)$ and the Lie algebra $\mathfrak t$ of $T$ contains $\xi$.
 Then $T$-action extends naturally as isometries of the K\"ahler cone associated to $(S,D,J,\xi)$,
 and the image of its moment map $\mu_\xi : V \to \mathfrak t^{\ast}$ is a convex 
 polyhedral cone (\cite{Lerman02Illinois}). If further $n=m+1$, which is the maximal dimension
 of the effective torus action, $V$ is a toric manifold and the complex structure of $V$ is
 invariant under $T$-invariant deformations of $(D,J,\xi)$. 
 
 Motivated by this toric setting, when we consider deformations fixing $\xi$ we restrict ourselves 
 to the situation where the complex structure of the K\"ahler cone $V$ is fixed.
 Then the transverse holomorphic structure on the local orbit spaces of the flow generated by $\xi$ is also fixed.
Thus we try to deform the transverse K\"ahler form
$$ \omega^T = \frac12 d\eta_\xi$$ 
into another transverse K\"ahler form with different contact structure
where we have written $\eta_\xi$ instead of $\eta_\tau$ with $\tau = \eta_D(\xi)$ and keep this new
 notation hereafter. As in K\"ahler geometry one may deform the transverse K\"ahler form
 $\omega^T$ into
 $$ \omega^T_\varphi = \omega^T + i \partial_B\barpartial_B\varphi$$
 using a basic smooth function $\varphi \in C^\infty_B(S)$. Here $\partial_B$ and $\barpartial_B$ 
 are basic $\partial$ and $\barpartial$ operators which are naturally defined on complex valued
 basic forms, and $\barpartial_B$ naturally defines basic Dolbeault cohomology $H_B^{p,q}(S)$.
 Then we have
 \begin{eqnarray*}
  \omega^T_\varphi &=& \frac12 dd^c \log re^{2\varphi}\\
  &=& \frac12 d (\eta_\xi + 2d^c \varphi).
 \end{eqnarray*}
Thus 
\begin{equation}\label{SE0}
D':= \ker (\eta_\xi + 2d^c \varphi)
\end{equation}
is a variation of $D$ with fixed Reeb vector field $\xi$.
This deformation of $D$ into $D'$ is regarded as a deformation of the cone manifold structure
by changing the radial function $r$ into $re^{2\varphi}$. As argued in \cite{HeSun16Adv, CollinsSzeke18JDG, 
BoyerCoev18MRL, ApoCalLeg21Adv}, if we fix $D$ and deform $\xi$ we can take $\{r'=1\} =
\{r = 1\} = S$, but when $\xi$ is fixed and $D$ is deformed the submanifold $\{r = 1\}$ has to change
into $\{ r e^{2\varphi} = 1\}$.

Let $\omega$ be arbitrary basic K\"ahler form (not necessarily equal to $ \omega^T = \frac12 d\eta_\xi$). 
Denote by $\rho^T(\omega)$ the transverse Ricci form associated with the transverse Ricci curvature $\Ric^T(\omega)$:
 $$ \rho^T(\omega) = -i \partial_B\barpartial_B \log \omega^m.$$
 Then the basic cohomology class represented by $\rho^T(\omega)/2\pi$ is independent of the choice of
 the K\"ahler form $\omega$. We call this basic cohomology class the {\it basic first Chern class}
 and denote it by $c_1^B(S)$.

Next we recall known results about an obstruction to the existence of transverse K\"ahler-Einstein metrics 
and transverse coupled K\"ahler-Einstein metrics
studied in \cite{futaki83.1}, \cite{futaki87}, \cite{futaki88}, \cite{FOW}, \cite{futaki10}, \cite{FutakiZhang18}, \cite{FutakiZhang19}, \cite{futaki21PAMQ}.
Suppose $c_1^B(S) > 0$ and choose this basic class as a basic K\"ahler class.
Unless we assume $c_1(D) = 0$ as in Lemma \ref{SE2} in section \ref{section4},
this class may not be a positive multiple of the standard transverse K\"ahler class $[\omega^T]_B = [\frac12 d\eta_\xi]_B$ where $[\cdot]_B$ denotes the basic cohomology class,
but for later applications we have in mind the case when $c_1^B(S) = (m+1)[\frac12 d\eta_\xi]_B$.

Let $T$ be a real compact torus acting effectively as $CR$-automorphisms
of $(D,J)$ such that $\xi$ is contained in the Lie algebra $\mathfrak t$ of $T$.
We identify a Lie algebra element $X \in \mathfrak t$ as a smooth vector field on $S$.
Further, by Lemma \ref{Sasaki2}, $X$ descends to a holomorphic Killing potential on
local orbit spaces of the Reeb flow of $\xi$, 
thus as a transverse holomorphic vector field. We take this view point below.
Let $\omega$ be a $T$-invariant basic K\"ahler form in $\frac1{m+1} c_1^B(S)$.
Let $F$ be a $T$-invariant basic smooth function on $S$ such that
\begin{equation}\label{Fut0}
 \rho^T(\omega) = (m+1)\omega + i\partial_B\barpartial_B F.
 \end{equation}
Then since $c_1^B(S) > 0$, for any $X \in \mathfrak t$ there exists a smooth basic
function $v$ such that
\begin{equation}\label{Fut0.1}
 i(X)\omega = - dv.
 \end{equation}
 Note that for $\xi \in \mathfrak t$, $i(\xi)\omega = 0$ since $\omega$ is basic and thus $v=0$.
Then with the normalization of $v$, by
\begin{equation}\label{Fut0.2}
 \int_S v\,e^F \omega^m\wedge \eta_\xi = 0
  \end{equation}
the same arguments as in Proposition 4.1 in \cite{futaki87} (see also Theorem 2.4.3 in \cite{futaki88})
one can show that $v$ satisfies
\begin{equation}\label{Fut1}
\Delta_B v + v^i F_i + (m+1)v = 0
\end{equation}
where $\Delta_B$ denotes the $\barpartial_B$-Laplacian.
Note that 
\begin{eqnarray*}
v^i F_i &=& g^{i\barj}v_{\barj}F_i\\
&=&\frac i2 (X -iJX)F \\
&=& \frac12 (JX)F
\end{eqnarray*}
since $F$ is $T$-invariant. Define $\Fut : \mathfrak t/\bfR \xi \to \bfR$ by
\begin{eqnarray}
 \Fut(X) &=& \int_S v^i F_i\, \omega^m\wedge\eta_\xi \label{Fut2}\\
 &=& \int_S \frac12 (JX)F\,\omega^m\wedge\eta_\xi. \nonumber
\end{eqnarray}
Then as in \cite{futaki83.1}, \cite{FOW}, $\Fut$ is independent of choice of $\omega$ in 
$\frac1{m+1} c_1^B(S)$, and the non-vanishing of $\Fut$ obstructs the existence of
a transverse K\"ahler-Einstein metric in $\frac1{m+1} c_1^B(S)$ by \eqref{Fut0}.
This invariant can be expressed in terms of the transverse moment map
$\mu^T : S \to (\mathfrak t/\bfR \xi)^\ast$
\begin{equation}\label{Fut3}
\langle \mu^T(x) , X \rangle = v(x)
\end{equation}
where $v$ is related with $X$ by \eqref{Fut0.1} with the normalization \eqref{Fut0.2}.
The image of $\mu^T$ is a compact convex polytope, and this polytope is unchanged
even if the K\"ahler form $\omega$ is changed in the same cohomology class.
This can be checked by noting that the vertices of the polytopes are the critical values
of $v$'s and that if $\omega$ changes to 
$\omega_\varphi = \omega + i\partial_B\barpartial_B\varphi$ then $v$ changes to
$v + v^\alpha \varphi_\alpha$, and the critical values do not change. Notice that
\eqref{Fut0.2} and \eqref{Fut1} are also preserved under these changes.
Then since
\begin{equation}\label{Fut4}
\Fut(X) = - (m+1)\int_S v\, \omega^m \wedge \eta_\xi
\end{equation}
by \eqref{Fut1} it follows that $\Fut$ vanishes if and only if the barycenter of the 
image of the moment map $\mu^T$ lies at zero. 

To express this moment polytope of $\mu^T$ we let $K_S$ denote the complex
line bundle over $S$ consisting of basic $(m,0)$-forms. Then $c_1^B(S) = c_1(K_S^{-1})$.
The moment polytope of $\mu^T$ is associated with the basic K\"ahler class $\frac1{m+1} K_S^{-1}$.
Thus we express the moment polytope of $\mu^T$ by $\frac1{m+1}\mathcal P_{-K_S}$.

Next we recall an obstruction to the existence of transverse coupled K\"ahler-Einstein
metrics. Suppose $c_1^B(S) > 0$. 
A {\it decomposition} of $c_1^B(S)$ is a sum
\begin{equation*}
 c_1^B(S) = (\gamma_1 + \cdots + \gamma_k)/2\pi
\end{equation*}
of positive basic $(1,1)$ classes $\gamma_\alpha/2\pi$. 
If we choose basic K\"ahler forms 
$\omega_\alpha$ representing $\gamma_\alpha$, there exist smooth basic functions $F_\alpha$ such that 
\begin{equation}\label{potential}
\rho^T(\omega_\alpha) - \sqrt{-1}\partial_B\barpartial_B F_\alpha = \sum_{\beta=1}^k \omega_\beta, \quad \alpha = 1,\ \cdots,\ k.
\end{equation}
We say $\omega_\alpha$'s are {\it transverse
coupled K\"ahler-Einstein metrics} if $F_\alpha$ is constant
so that transverse coupled K\"ahler-Einstein metrics satisfy 
\begin{equation*}
\rho^T(\omega_1) = \cdots = \rho^T(\omega_k) = \sum_{\beta=1}^k \omega_\beta.
\end{equation*}
We will also call transverse
coupled K\"ahler-Einstein metrics {\it coupled Sasaki-Einstein metrics} when we further
assume $c_1(D) = 0$ since, for $k=1$, $\frac1{m+1}\omega_1$ is a Sasaki-Einstein metric
with respect to a modified contact form as in \eqref{SE0}. 

We choose $\omega_\alpha$ in 
$\gamma_\alpha$ and normalize $F_\alpha$ so that 
\begin{equation}\label{coupledS2}
e^{F_1}\omega_1^m= \cdots = e^{F_k}\omega_k^m
\end{equation}
and put
\begin{equation}\label{coupledS3}
dV := e^{F_\alpha} \omega_\alpha^m \wedge \eta_\xi.
\end{equation}
For $X \in \mathfrak t$, let $v_\alpha$ be the basic function satisfying
\begin{equation}\label{coupledS3.1}
 i(X)\omega_\alpha = - dv_\alpha
 \end{equation}
 with normalization condition 
 \begin{equation}\label{coupledS4}
 \int_S (v_1 + \cdots + v_k)\ dV = 0.
 \end{equation}
 Then as proved in \cite{FutakiZhang18}, Theorem 3.3, $v_\alpha$'s satisfy
 \begin{enumerate}
\item $\nabla_\alpha^i v_\alpha=\nabla_\beta^i v_\beta$ for  $i=1, 2, \dots, n$ and $\alpha, \beta=1, 2, \dots, k$.
\item $\Delta_\alpha v_\alpha+v^i _\alpha F_{\alpha i}+ \sum\limits_{\beta=1}^k v_\beta = 0$ for $\alpha=1, 2, \dots, k$, where $\Delta_\alpha=-\barpartial_\alpha^*\barpartial$ is the Laplacian with respect to the K\"ahler form 
$\omega_\alpha$.
\end{enumerate}
It is also shown in \cite{FutakiZhang18}, Theorem 5.2, that \eqref{coupledS4} is equivalent to
the Minkowski sum relation
$$ \sum_{\alpha=1}^k \mathcal P_\alpha =\mathcal P_{-K_S}$$
where $\mathcal P_\alpha$ is the moment polytope for $\omega_\alpha$.
This of course means
\begin{equation}\label{coupledS5}
\sum_{\alpha=1}^k \frac1{m+1}\mathcal P_\alpha =\frac1{m+1}\mathcal P_{-K_S},
\end{equation}
the right hand side being the moment polytope for $\mu^T$. 
Define $\Fut^{cpld} : \mathfrak t \to \bfR$ by
\begin{equation}\label{cpldfutaki}
\Fut^{cpld}(X)
= \sum\limits_{\alpha=1}^k\frac{\int_S v_\alpha\ \omega_\alpha^m\wedge\eta_\xi}{\int_S \omega_\alpha^m\wedge\eta_\xi}.
\end{equation}
Then $\Fut^{cpld}$ is independent of the choice of $\omega_\alpha$ in $\gamma_\alpha$,
the nonvanishing of $\Fut^{cpld}$ obstructs the existence of transverse coupled K\"ahler-Einstein metrics
and $\Fut^{cpld}$ vanishes if and only if the sum of the barycenters of $\mathcal P_\alpha$ lies
at zero (\cite{FutakiZhang18}, Theorem 1.4). In the toric case where $\dim T = m+1$, the vanishing
of $\Fut^{cpld}$ is a sufficient condition for the existence of transverse coupled K\"ahler-Einstein
metrics, which is a straightforward extension of the celebrated results of Wang-Zhu \cite{Wang-Zhu04}, Donaldson
\cite{donaldson0803}, Hultgren \cite{Hultgren17}.

As a summary of this section, the obstructions $\Fut$, $\Fut^{cpld}$ and the transverse 
moment polytopes of $\mu^T$ depend only
on the Reeb vector field $\xi$, its basic first Chern class $c_1^B(S)$ and its decomposition.
But as mentioned above we have in mind the case $c_1^B(S) = (m+1)[\frac12 d\eta_\xi]_B$,
in which case $\Fut$, $\Fut^{cpld}$ are independent of the choice of $D'$ of the form \eqref{SE0}.
As a conclusion of this section, when we study $\Fut$, $\Fut^{cpld}$ and moment polytopes under
the variation of Reeb vector fields we may choose an arbitrary strongly pseudo-convex 
$CR$-structure $(D,J)$, fix it, and thus are in the position of Lemma \ref{Sasaki2}.

\section{The transverse moment map and the contact moment map}\label{section4}

The equality \eqref{Sasaki7} shows that a
 necessary condition for the existence of a Sasaki-Einstein metric is
  \begin{equation}\label{SE1}
 c_1^B(S) = (m+1) [\frac12 d\eta_\xi]_B
 \end{equation}
 as a basic cohomology. 
Here $[d\eta_\xi]_B$ denotes the basic cohomology class which is a positive $(1,1)$-class as a
  basic class though it is zero as a de Rham class of $S$. 
Recall that the basic first Chern class is said to be positive, denoted $c_1^B(S) > 0$, if it is represented by a positive basic $(1,1)$-form, i.e. a basic K\"ahler form. This is an obvious necessary condition for the existence of Sasaki-Einstein metric by \eqref{SE1}.
The following lemma is well-known and important for us, see \cite{BGbook}.  
\begin{lemma}\label{SE2}
If $c_1^B(S) > 0$ and $c_1(D) = 0$ then by changing $r$ into $r^a$ for a positive constant $a$ if necessary we can assume
\eqref{SE1} is satisfied. (The transformation from $r$ into $r^a$ is called the $D$-homothetic transformation.)
\end{lemma}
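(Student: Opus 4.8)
The plan is to reduce the assertion to a comparison of two basic $(1,1)$-classes and then to arrange a $D$-homothetic transformation that scales one into a multiple of the other. First I would recall the meaning of the hypotheses. The condition $c_1(D)=0$ says that the canonical bundle of the $CR$-structure is topologically trivial as a complex line bundle over $S$; together with the description of the transverse holomorphic structure in Section 2, this implies that the line bundle $K_S$ of basic $(m,0)$-forms is not merely topologically trivial but admits a nowhere-vanishing basic section, i.e. a transverse holomorphic volume form $\Omega$ on the local orbit spaces. Hence $c_1^B(S) = c_1(K_S^{-1})$ is represented, in basic Chern--Weil theory, by $-i\partial_B\barpartial_B$ of a basic function (namely the logarithm of the pointwise norm of $\Omega$ with respect to the transverse Hermitian metric $\frac12 d\eta_\xi$). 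This means precisely that the difference
$$ c_1^B(S) - c \,[\tfrac12 d\eta_\xi]_B $$
is a basic class of the form $[-i\partial_B\barpartial_B h]_B$ for a suitable real constant $c$ and basic function $h$; the point is that $c_1^B(S)$ and $[\frac12 d\eta_\xi]_B$ are \emph{proportional} in $H^{1,1}_B(S)$. Concretely, one invokes (or re-proves) the standard fact from \cite{BGbook} that under $c_1^B(S)>0$ and $c_1(D)=0$ there is a positive constant $\tau$ with $c_1^B(S) = \tau [d\eta_\xi]_B$, which is exactly the statement quoted just before the lemma; the new content here is only to make the $D$-homothety adjust $\tau$ to the normalized value $\frac{m+1}{2}$.

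Next I would make the $D$-homothetic transformation explicit. Replacing the radial function $r$ of the K\"ahler cone by $r^a$ for $a>0$ (equivalently, by the computation following \eqref{SE0}, replacing $\eta_\xi$ by $a\,\eta_\xi$ since $\eta_\xi = d^c\log r$), the transverse K\"ahler form rescales as $\frac12 d\eta_\xi \mapsto \frac{a}{2} d\eta_\xi$, while the transverse complex structure, hence the transverse canonical bundle and hence $c_1^B(S)$, is unchanged: the class $c_1^B(S)$ does not depend on the choice of transverse K\"ahler metric, only on the transverse holomorphic structure, which the $D$-homothety preserves. Therefore $[\frac12 d\eta_\xi]_B$ scales linearly in $a$ while $c_1^B(S)$ is fixed, so choosing $a = \tau/(m+1)$ (with $\tau$ as above, noting $\tau>0$ because $c_1^B(S)>0$ and $[d\eta_\xi]_B$ is a positive basic class) yields $c_1^B(S) = (m+1)[\frac12 d\eta_\xi]_B$, which is \eqref{SE1}. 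One should also check that after this rescaling the Reeb vector field is replaced by $\xi/a$, still a legitimate Reeb field for the rescaled contact form, so the resulting data is again a Sasaki structure; this is routine from the formulas in Section 2.

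The main obstacle is the first step: establishing that $c_1^B(S)$ is a positive real multiple of $[\frac12 d\eta_\xi]_B$ in basic cohomology, rather than merely both being positive. This rests on $c_1(D)=0$ producing a transverse holomorphic volume form, which requires knowing that the long exact sequence relating $H^2(S;\bfZ)$, the basic classes, and the de Rham class of $d\eta_\xi$ (the Gysin-type sequence for the Reeb foliation) forces the basic class $c_1^B(S)$ into the line spanned by $[d\eta_\xi]_B$ once its image in $H^2(S;\mathbf R)$ vanishes --- and $c_1(D)=0$ is exactly what kills that image, since $c_1^B(S)$ and $c_1(D)$ have the same image in de Rham cohomology. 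Since the lemma asserts this is \emph{well-known}, I would cite \cite{BGbook} for this structural fact and devote the written proof mainly to the bookkeeping of the $D$-homothety in the second and third steps.
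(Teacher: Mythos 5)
Your proposal is correct and follows essentially the same route as the paper: invoke the standard fact (Boyer--Galicki, Corollary 7.5.26, i.e.\ the Gysin-type sequence argument you sketch) that $c_1(D)=0$ forces $c_1^B(S)=\tau[d\eta_\xi]_B$, use $c_1^B(S)>0$ to get $\tau>0$, and then absorb $\tau$ by the $D$-homothety $r\mapsto r^a$. The only discrepancy is the value of $a$ (you write $a=\tau/(m+1)$, the paper writes $a=(m+1)/\tau$), which is a matter of constant/convention bookkeeping in how the homothety rescales $[\tfrac12 d\eta_\xi]_B$ and does not affect the argument.
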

\begin{proof}
If $c_1(D) = 0$, then $c_1^B = \tau [d\eta_\xi]$ for some 
constant $\tau$ 
by \cite{BGbook}, Corollary 7.5.26. Since $c_1^B(S) > 0$ we must have $\tau > 0$. Then we may take $a = (m+1)/\tau$. 
\end{proof}

The summary at the end of previous section 
is an observation concerning the transverse moment map $\mu^T$, but we may compare
it with the contact moment map $\mu^{con} : V \to \mathfrak t^\ast$ defined by 
\begin{equation}\label{con}
\langle \mu^{con}(x), X \rangle = (r^2\eta_\xi(X))(x).
\end{equation}
The linkage between the transverse moment map $\mu^T$, which is defined with respect to the
transverse K\"ahler class $c_1^B(S)$, and the contact moment map $\mu^{con}$, which is defined
with respect to the contact form $\eta_\xi$, 
is
played by the conditions $c_1(D) = 0$ and $c_1^B(S) > 0$ in 
Lemma \ref{SE2}. The relation of $v$ in \eqref{Fut3} and $\eta_\xi(X)$ in \eqref{con} is that
$$ v = \frac{m+1}2 \eta_\xi(X) + c$$
where $c$ is determined by the normalization \eqref{Fut0.2}.

By \cite{Lerman02Illinois}, the image of $\mu^{con}$ is a convex polyhedral cone, which we denote 
by $\mathcal C$. Identifying $S$ with $\{r=1\}$ we have the moment map of $S$ by
restricting $\mu^{con}$ to $\{r=1\}$. The image of $S$ is 
$$ \mathcal P_\xi := \mathrm{Image}(\mu^{con}) \cap \{p \in \mathfrak t^\ast\,|\,\langle p, \xi\rangle = 1\}$$
since $\eta_\xi(\xi) = 1$. This set is called the {\it characteristic hyperplane} in $\mathcal C$.

Since the Hamiltonian functions for the basis of $\mathfrak t/\bfR \xi$ determine affine coordinates
on the images of $\mu^T$ and $\mu^{con}$, the map 
\begin{equation}\label{con2}
\Phi := \mu^T\circ(\mu^{con})^{-1}|_{\mathcal P_\xi} : \mathcal P_\xi
\to \frac1{m+1}\mathcal P_{-K_S}
\end{equation}
is an affine map in terms of those affine coordinates. Note that $\frac1{m+1}\mathcal P_{-K_S}$ is in
$(\mathfrak t/\bfR \xi)^\ast$ which is a vector space and contains the origin $0$ but that $ \mathcal P_\xi$
is in a hyperplane in the cone $\mathcal C \subset \mathfrak t^\ast$.

\begin{proof}[Proof of Proposition \ref{thm2}]
We take $\bfo$ and $\mathcal P_{\xi,\alpha}$ so that to be $\Phi(\bfo)=0$ and 
$\Phi(\mathcal P_{\xi,\alpha}) = \frac1{m+1}\mathcal P_\alpha$ in \eqref{coupledS5}. Then we obtain the
Minkowski decomposition as claimed in (1). Further, the barycenter of the domain polytope is mapped to the barycenter of the image polytope by an affine map, and thus the last claim in (1) follows. 

By \eqref{coupledS4}, $\mathcal P_\alpha$'s are unique up to translations $\mathcal P_\alpha + c_\alpha$
satisfying $c_1 + \cdots + c_k = 0$, from which (2) follows.

There is a unique cone $\mathcal C_{\xi, \alpha}$ such that 
$\mathcal C_{\xi, \alpha} \cap \mathcal P_\xi = \mathcal P_{\xi, \alpha}$. 
Then \eqref{Mink0} implies \eqref{Mink1}. This completes the proof of Proposition \ref{thm2}.
\end{proof}


Suppose now that the dimension of the torus $T$ is $m+1$ so that the K\"ahler cone $V = C(S)$ 
is a toric manifold. Then the moment map image is a convex polyhedral cone which is 
{\it good} in the sense of Lerman \cite{Lerman03JSG} described as follows.
Let $\mathcal C$  be the convex rational polyhedral cone described as
$$ \mathcal C = \{ p \in \mathfrak t^\ast\backslash \{o\}\,|\, \langle p, \ell_a\rangle \ge 0,\ a = 1, \cdots, d\}$$
where $\ell_a \in \mathfrak t$ such that $2\pi \ell_1, \cdots, 2\pi\ell_d$ are primitive elements of 
the kernel $\Lambda$ of $\exp : \mathfrak t \to T$. We regard $\ell_a$ as a linear function on $\mathfrak t^\ast$
and write
$$ \ell_a(p) = \langle p, \ell_a \rangle \ \ \text{for}\ \ p \in \mathfrak t^\ast.$$
We say $\mathcal C$ is good if for any 
face 
$F = \cap_{j=1}^k \{\ell_{a_j} = 0\}$ of codimension $k$ we have
\begin{equation}\label{good}
(\bfR \ell_{a_1} + \cdots + \bfR \ell_{a_k}) \cap \bfZ^{m+1} = \bfZ \ell_{a_1} + \cdots + \bfZ \ell_{a_k}.
\end{equation}
It is shown in \cite{Lerman03JSG} that if $S$ is smooth then $\mathcal C := \mu^{con}(C(S))$ is a good
convex rational polyhedral cone 
and conversely if $\mathcal C$ is a good
convex rational polyhedral cone 
then there is a smooth toric Sasaki manifold $S$ such that 
$\mu^{con}(C(S)) = \mathcal C$. 

For a toric Sasaki manifold there are descriptions using the action-angle coordinates 
\cite{Guillemin94JDG}, \cite{Abreu}, \cite{MSY1}, \cite{MSY2}. 
As explained in the Introduction, a salient fact
when $c_1^B(S) > 0$ and $c_1(D) = 0$ 
is that there is a distinguished point $\gamma \in \mathcal P_\xi$ with the property that
\begin{equation}\label{q1}
\langle \gamma, \xi \rangle = -(m+1)\qquad  \text{and}\qquad\ell_a (\gamma) = -1 
\quad  \text{for}\ a=1,\cdots, d,
\end{equation}
see \cite{MSY1}, \cite{CFO}.
Theorem \ref{thm3} claims that the origin $\bfo$ coincides with $q:=-\frac1{m+1}\gamma$.

\begin{proof}[Proof of Theorem \ref{thm3}]
By the proof of Proposition \ref{thm2}, 
we have only to show $\Phi(q) = 0$ for the affine map $\Phi$ defined \eqref{con2}.
This proof is motivated by the computations in \cite{deBorbonLeg20}. First of all, by Donaldson's expression of
the obstruction in \cite{donaldson02}
\begin{equation}\label{q2.1}
\Fut(X) =  \int_{\partial P_\xi} y \sigma_\xi -
\frac{\int_{\partial \mathcal P_\xi}\sigma_\xi}{\int_{\mathcal P_\xi}d\tilde x} \int_{\mathcal P_\xi} y\, d\tilde{x}
\end{equation}
where $y$ is an affine function corresponding to $X \in \mathfrak t$, $\tilde x = (\tilde x^1, \cdots, \tilde x^m)$
are affine coordinates on the hyperplane $\{p \in \mathfrak t^\ast\,|\, \langle p,\xi\rangle = 1\}$, and
$$ d\ell_a \wedge \sigma_\xi = - d\tilde x^1 \wedge \cdots \wedge d\tilde x^m \quad \text{on the facet}\ F_a \cap P_\xi.$$
Note that $\frac{\int_{\partial \mathcal P_\xi}\sigma_\xi}{\int_{\mathcal P_\xi}d\tilde x}$ is the average (K\"ahler 
geometers') scalar curvature, which is equal to $m(m+1)$ for $\omega \in 2\pi c_1^B(S)/(m+1)$.
As shown in \cite{deBorbonLeg20}, Lemma 3.8, $\sigma_\xi$ is expressed using the distinguished point $q$ by
\begin{equation}\label{q3}
\sigma_\xi = \frac1{m+1} \sum_{i=1}^m (-1)^{i+1}(\tilde x^i - q^i) d\tilde x^1 \wedge \cdots \wedge
\widehat{d\tilde x^i} \wedge \cdots \wedge d\tilde x^m.
\end{equation}
Using $d\sigma_\xi = m(m+1)d\tilde x$ and Stokes theorem, one can show
\begin{equation}\label{q4}
 \int_{\partial \mathcal P_\xi}  \sigma_\xi = m(m+1)\int_{\mathcal P_\xi} d\tilde x
\end{equation}
as expected to get average scalar curvature and 
\begin{eqnarray}
\int_{\mathcal P_\xi} \tilde x^i d\tilde x &=& \frac1{m(m+1)} \int_{\mathcal P_\xi} \tilde x^i d\sigma_\xi \nonumber\\
&=& \frac1{m(m+1)} \int_{\partial \mathcal P_\xi} \tilde x^i \sigma_\xi - 
\frac1{m} \int_{\mathcal P_\xi} (\tilde x^i - q^i) d\tilde x.\label{q5}
\end{eqnarray}
It follows from \eqref{q4} and \eqref{q5} that
\begin{equation}\label{q6}
\int_{\partial P_\xi}\tilde x^i \sigma_\xi - 
\frac{\int_{\partial \mathcal P_\xi}\sigma_\xi}{\int_{\mathcal P_\xi}d\tilde x} \int_{\mathcal P_\xi} \tilde x^i\, d\tilde{x}
 = -(m+1) \int_{\mathcal P_\xi} \tilde x^i d\tilde x + (m+1) q^i.
\end{equation}
Comparing \eqref{q2.1} and \eqref{q6} we see that $\Fut$ vanishes if and only if the barycenter of 
$\mathcal P_\xi$ lies at the
distinguished point $q$. By the affine map $\Phi$, the barycenter of $\mathcal P_\xi$ is mapped to the barycenter
of $\frac1{m+1}\mathcal P_{-K_S}$. Therefore, in view of \eqref{Fut4}, $q$ is mapped to the origin $0$ in $\frac1{m+1}\mathcal P_{-K_S}$.
\end{proof}

\section{Volume minimization}\label{section5}

 
 In this section we prove Theorem \ref{thm1}.
 Let $\mathcal C$ be one of $\mathcal C_\xi$ or $\mathcal C_{\xi,\alpha}$'s , and $\mathcal C^\ast$ be
 $$ \mathcal C^\ast = \{ y \in \mathfrak t\ |\ \langle y,p \rangle \ge 0\ \ \text{for all}\ p \in \mathcal C\}.$$
 For a Reeb vector field $\xi$ we put
 $$ \mathcal P_\xi = \{ p \in \mathcal C \subset \mathfrak t^\ast\ |\ \langle p, \xi \rangle = 1\}.$$
Let $q \in P_\xi$ be fixed (we have $q$ in the proof of Theorem \ref{thm3} in mind which is also equal to
the origin $\bfo$ in the statement of Theorem \ref{thm3}), and put
$$ \Xi_q = \{ \xi' \in \mathcal C^\ast \subset \mathfrak t\ |\ \langle \xi',q \rangle = 1 \}.$$
We regard $\Xi_q$ as the space of Reeb vector fields, and look for $\xi' \in \Xi_q$ which minimizes
the volume functional defined as follows. For $\xi' \in \Xi_q$ we define
\begin{equation}\label{Stokes0}
\mathcal P_{\xi'} = \{ p \in \mathcal C\ |\ \langle \xi',p \rangle = 1 \},
\end{equation}
$$ \Delta_{\xi'} = \{ p \in \mathcal C\ |\ \langle p, \xi \rangle \le 1\}$$
and the volume functional $ \Vol : \Xi_q \to \bfR$
by
\begin{eqnarray}
 \Vol(\xi') &:=& \frac1{|\xi'|}\Vol (\mathcal P_{\xi'}) \nonumber\\
 &=& (m+1) \Vol(\Delta_{\xi'}). \label{Stokes1}
 \end{eqnarray}

\begin{proposition}\label{prop1}
Let $\xi_t = \xi' + t\nu$ be a path in $\Xi_q$. Then
$$
\left.\frac{d}{dt}\Vol(\xi_t)\right|_{t=0} = - \frac{m+1}{|\xi'|} \int_{\mathcal P_{\xi'}} \langle p,\nu \rangle d\sigma
$$
where $d\sigma$ is the natural measure on $\mathcal P_{\xi'}$ induced from $\mathcal P_{\xi'} \subset
\mathfrak t^\ast \cong \bfR^{m+1}$ where the last isomorphism is induced from the property \eqref{good} of 
good convex polyhedral cone.
\end{proposition}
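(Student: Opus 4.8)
The plan is to compute the derivative by expressing $\Vol(\xi_t)$ as a volume integral over the fixed cone $\mathcal C$, so that all the $t$-dependence sits inside a single scalar factor. Concretely, by \eqref{Stokes1} we have $\Vol(\xi_t) = (m+1)\Vol(\Delta_{\xi_t})$, and
$$
\Vol(\Delta_{\xi_t}) = \int_{\mathcal C} \chi\{\langle p,\xi_t\rangle \le 1\}\, dp
$$
where $dp$ is the fixed Lebesgue measure on $\mathfrak t^\ast \cong \bfR^{m+1}$ normalized by \eqref{good}. The cleaner route, which avoids differentiating a characteristic function, is to rescale radially: the dilation $p \mapsto \langle p,\xi_t\rangle^{-1} p$ maps $\mathcal P_{\xi'}$ (where $\langle p,\xi'\rangle = 1$) onto $\mathcal P_{\xi_t}$, and integrating in "polar" coordinates adapted to the cone one gets
$$
(m+1)\Vol(\Delta_{\xi_t}) = \int_{\mathcal P_{\xi'}} \frac{d\sigma(p)}{\langle p,\xi_t\rangle^{\,m+1}} \cdot c
$$
for an appropriate constant $c$ depending only on the normalization of $d\sigma$ versus $dp$; in fact the natural normalization makes $c = 1$, which is exactly the content of the identity $\Vol(\xi') = (m+1)\Vol(\Delta_{\xi'})$ read at $t=0$. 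Here $d\sigma$ is the measure on the hyperplane slice $\mathcal P_{\xi'}$ induced from $\mathfrak t^\ast$, and the factor $|\xi'|^{-1}$ in \eqref{Stokes1} is absorbed because $\Vol(\mathcal P_{\xi'})$ in the first line of \eqref{Stokes1} is the Riemannian $m$-volume while $d\sigma$ here is the affine slice measure, the two differing precisely by $|\xi'|$.

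Granting the representation above, the computation is immediate: since $\langle p,\xi_t\rangle = \langle p,\xi'\rangle + t\langle p,\nu\rangle$ and $\langle p,\xi'\rangle = 1$ on $\mathcal P_{\xi'}$,
$$
\left.\frac{d}{dt}\right|_{t=0}\frac{1}{\langle p,\xi_t\rangle^{\,m+1}} = -(m+1)\,\langle p,\nu\rangle,
$$
and differentiation under the integral sign (justified since $\mathcal P_{\xi'}$ is compact and the integrand is smooth in $t$ near $0$) gives
$$
\left.\frac{d}{dt}\Vol(\xi_t)\right|_{t=0} = -(m+1)\int_{\mathcal P_{\xi'}} \langle p,\nu\rangle\, d\sigma.
$$
Finally one reinstates the $|\xi'|^{-1}$ normalization used in the statement of the proposition (matching the first line of \eqref{Stokes1}) to obtain the claimed formula
$$
\left.\frac{d}{dt}\Vol(\xi_t)\right|_{t=0} = -\frac{m+1}{|\xi'|}\int_{\mathcal P_{\xi'}} \langle p,\nu\rangle\, d\sigma.
$$
One should also note that $\xi_t \in \Xi_q$ for all $t$ forces $\langle \nu, q\rangle = 0$, and that $\xi_t$ stays in the interior of $\mathcal C^\ast$ for $t$ small, so all the polytopes $\mathcal P_{\xi_t}$ are compact and the family varies smoothly — this is what legitimizes the differentiation under the integral and the use of the dilation map.

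The main obstacle is bookkeeping of the normalizing constants rather than anything conceptual: one must pin down exactly how the slice measure $d\sigma$ on $\mathcal P_{\xi'}$, the Riemannian volume $\Vol(\mathcal P_{\xi'})$, the factor $|\xi'|$, and the ambient measure $dp$ on $\mathfrak t^\ast$ are related, and verify that the "polar coordinate" (radial scaling) change of variables on the cone produces precisely the weight $\langle p,\xi'\rangle^{-(m+1)}$ with the constant that reconciles the two equivalent formulas for $\Vol(\xi')$ in \eqref{Stokes1}. Once that is set up correctly at $t=0$, the $t$-derivative is a one-line computation. An alternative that sidesteps the radial-coordinates identity is to differentiate $\Vol(\Delta_{\xi_t})$ directly as a function of a moving hyperplane: $\Delta_{\xi_t} \setminus \Delta_{\xi'}$ is, to first order in $t$, the region swept between $\mathcal P_{\xi'}$ and $\mathcal P_{\xi_t}$, whose signed infinitesimal width at $p \in \mathcal P_{\xi'}$ is $-t\langle p,\nu\rangle/|\xi'|$ measured along the unit normal; integrating this width against $d\sigma$ over $\mathcal P_{\xi'}$ gives the same answer, and this is perhaps the most transparent way to see the geometric meaning of the formula.
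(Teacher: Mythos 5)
Your proposal is correct and rests on the same key idea as the paper's proof: the radial dilation $p\mapsto p/\langle p,\xi_t\rangle$ identifying the slice for $\xi_t$ with the slice for $\xi'$. The execution differs slightly: the paper differentiates the Jacobian of this dilation over the solid region $\Delta_{\xi'}$, obtaining $-(m+2)\int_{\Delta_{\xi'}}\langle p,\nu\rangle\,dp$, and then converts to a boundary integral over $\mathcal P_{\xi'}$ via the divergence theorem applied to $p_i\,p$ (the radial field being tangent to the cone's side faces, only the slice contributes, with flux factor $\langle p,\xi'/|\xi'|\rangle=1/|\xi'|$); you instead integrate out the radial direction first, writing $(m+1)\Vol(\Delta_{\xi_t})=\frac{1}{|\xi'|}\int_{\mathcal P_{\xi'}}\langle p,\xi_t\rangle^{-(m+1)}d\sigma$, and differentiate under the integral sign. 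These are two orderings of the same computation, and yours arguably makes the appearance of $|\xi'|^{-1}$ more transparent. The one blemish is your handling of the constant $c$: with $d\sigma$ the Lebesgue-induced measure on the slice (as in the statement), the polar-coordinate identity gives $c=1/|\xi'|$ outright, so there is nothing to ``absorb'' and then ``reinstate''; your intermediate line $\frac{d}{dt}\Vol(\xi_t)|_{t=0}=-(m+1)\int\langle p,\nu\rangle\,d\sigma$ is off by the factor $|\xi'|^{-1}$ that you only restore afterwards. Since you flag the normalization bookkeeping explicitly and the final formula is correct, this is a presentational wobble rather than a gap.
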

\begin{proof}
Let $\xi'$ be in $\Xi_q$ and 
$p$ be in $P_{\xi'}$ so that $\langle p,\xi' \rangle = 1$. For the path $\xi_t = \xi' + t\nu$ in $\Xi_q$
we have $\langle q, \nu \rangle = 0$. Write $p' \in P_{\xi_t}$ as $p' = sp$. Then
\begin{eqnarray*}
s &=& \frac 1{1 + t\langle p,\nu \rangle}\\
&=& 1 - t\langle p,\nu \rangle + t^2 (\langle p,\nu \rangle)^2 + O(t^3).
\end{eqnarray*}
Thus
$$
\left.\frac{d}{dt}\right|_{t=0} d(sp_1)\wedge \cdots \wedge d(sp_{m+1})
= -(m+2)\langle p, \nu \rangle dp_1 \wedge \cdots \wedge dp_{m+1}
$$
From this 
$$
\left.\frac{d}{dt}\Vol(\Delta_{\xi_t})\right|_{t=0} = - (m+2)\int_{\Delta_{\xi'}} \langle p,\nu \rangle dp_1 \wedge \cdots
\wedge dp_{m+1}.
$$
Using the Stokes theorem we have
\begin{eqnarray*}
(m+2)\int_{\Delta_{\xi'}} p_i dp_1 \wedge \cdots \wedge dp_{m+1} 
&=& \int_{\Delta_{\xi'}} (\sigma_j \frac{\partial}{\partial p_j} (p_j p_i)) dp_1 \wedge \cdots \wedge dp_{m+1}\\
&=& \frac1{|\xi'| } \int_{P_{\xi'}} p_i d\sigma
\end{eqnarray*}
Note that \eqref{Stokes1} can also be proved similarly. Thus we obtain
\begin{eqnarray*}
\left.\frac{d}{dt}\Vol(\xi_t)\right|_{t=0} &=& (m+1)\left.\frac{d}{dt}\Vol(\Delta_{\xi_t})\right|_{t=0}\\
&=& -(m+1)(m+2)\int_{\Delta_{\xi'}} \langle p, \nu \rangle dp_1 \wedge \cdots \wedge dp_{m+1}\\
&=& -\frac{m+1}{|\xi'|} \int_{P_{\xi'}} \langle p, \nu \rangle d\sigma.
\end{eqnarray*}
This completes the proof of Proposition \ref{prop1}.
\end{proof}
\begin{proposition}\label{prop2}
In the same situation as in Proposition \ref{prop1}
$$
\left.\frac{d^2}{dt^2}\Vol(\xi_t)\right|_{t=0} = 
\frac{(m+1)(m+2)}{|\xi'|} \int_{\mathcal P_{\xi'}} \langle p,\nu \rangle^2 dp_1 \wedge \cdots \wedge dp_{m+1},
$$
\end{proposition}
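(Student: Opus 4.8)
The plan is to repeat the strategy of Proposition~\ref{prop1}, carrying the Taylor expansion in $t$ one order further. Recall that the substitution used there is the map $\Phi_t\colon p\mapsto sp$ with $s=s(p)=(1+t\langle p,\nu\rangle)^{-1}$, which for small $t$ is a diffeomorphism of $\Delta_{\xi'}$ onto $\Delta_{\xi_t}$, so that
\[
 \Vol(\Delta_{\xi_t})=\int_{\Delta_{\xi'}}\Phi_t^\ast\bigl(dp_1\wedge\cdots\wedge dp_{m+1}\bigr).
\]
The first task is therefore to compute the Jacobian of $\Phi_t$ exactly, not merely to first order as in Proposition~\ref{prop1}.

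Writing $\phi(p)=\langle p,\nu\rangle$, which is linear so that $d\phi$ is a constant-coefficient combination of $dp_1,\dots,dp_{m+1}$, one has $d(sp_i)=s\,dp_i-\dfrac{t\,p_i}{(1+t\phi)^2}\,d\phi$. Wedging these $m+1$ one-forms, every term with two or more factors $d\phi$ vanishes, and collecting the $m+1$ surviving cross terms produces the cancellation
\[
 \Phi_t^\ast\bigl(dp_1\wedge\cdots\wedge dp_{m+1}\bigr)=\frac{1}{(1+t\langle p,\nu\rangle)^{m+2}}\,dp_1\wedge\cdots\wedge dp_{m+1},
\]
whose first derivative at $t=0$ recovers the identity used in Proposition~\ref{prop1}. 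Differentiating it twice at $t=0$ gives
\[
 \left.\frac{d^2}{dt^2}\Vol(\Delta_{\xi_t})\right|_{t=0}=(m+2)(m+3)\int_{\Delta_{\xi'}}\langle p,\nu\rangle^2\,dp_1\wedge\cdots\wedge dp_{m+1},
\]
and multiplying by $m+1$, in view of \eqref{Stokes1}, yields $\left.\tfrac{d^2}{dt^2}\Vol(\xi_t)\right|_{t=0}=(m+1)(m+2)(m+3)\int_{\Delta_{\xi'}}\langle p,\nu\rangle^2\,dp_1\wedge\cdots\wedge dp_{m+1}$.

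The final step rewrites the integral over the solid $\Delta_{\xi'}$ as one over the slice $\mathcal P_{\xi'}$, exactly as in Proposition~\ref{prop1}: since $\sum_j\frac{\partial}{\partial p_j}\bigl(\langle p,\nu\rangle^2 p_j\bigr)=(m+3)\langle p,\nu\rangle^2$ and the radial vector field $p$ is tangent to every facet of $\partial\mathcal C$, Stokes' theorem gives $(m+3)\int_{\Delta_{\xi'}}\langle p,\nu\rangle^2\,dp_1\wedge\cdots\wedge dp_{m+1}=\frac{1}{|\xi'|}\int_{\mathcal P_{\xi'}}\langle p,\nu\rangle^2\,d\sigma$, which upon substitution gives the asserted formula. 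I expect the only delicate point to be the wedge-product cancellation giving the exponent $m+2$ (rather than the $m+1$ of a $p$-independent rescaling), together with tracking the three factors $m+1$, $m+2$, $m+3$ and observing that the $m+3$ is precisely what the Stokes conversion absorbs; the rest is the same bookkeeping already present in Proposition~\ref{prop1}.
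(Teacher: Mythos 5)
Your proof is correct and follows essentially the same route as the paper: differentiate $\Vol(\Delta_{\xi_t})=(m+1)^{-1}\Vol(\xi_t)$ twice via the rescaling $p\mapsto sp$ to get the factor $(m+1)(m+2)(m+3)\int_{\Delta_{\xi'}}\langle p,\nu\rangle^2$, then use Stokes with the radial field $\langle p,\nu\rangle^2 p$ (divergence $(m+3)\langle p,\nu\rangle^2$, vanishing normal component on the cone facets) to convert to $\tfrac{1}{|\xi'|}\int_{\mathcal P_{\xi'}}\langle p,\nu\rangle^2\,d\sigma$. Your closed-form Jacobian $(1+t\langle p,\nu\rangle)^{-(m+2)}$ is a nice way to organize the expansion and is consistent with the first-order coefficient $-(m+2)\langle p,\nu\rangle$ appearing in the paper's proof of Proposition \ref{prop1}.
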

\begin{proof}
By similar computations as in the proof of the previous proposition we obtain the
desired equality from
$$
\left.\frac{d^2}{dt^2}\Vol(\xi_t)\right|_{t=0} = 
(m+1)(m+2)(m+3)\int_{\Delta_{\xi'}} \langle p,\nu \rangle^2 d\sigma
$$
and
$$
(m+3)\int_{\Delta_{\xi'}} p_i p_j dp_1 \wedge \cdots \wedge dp_{m+1} 
= \frac1{|\xi'| } \int_{P_{\xi'}} p_i p_j d\sigma.
$$
This completes the proof of Proposition \ref{prop2}.
\end{proof}

\begin{proof}[Proof of Theorem \ref{thm1}]
Let us take $\mathcal C$ in Proposition \ref{prop1} and \ref{prop2}
to be one of $\mathcal C_{\xi,1}, \cdots, \mathcal C_{\xi,k}$,
and 
consider the functional 
$W :  \Xi_q \to \bfR$ defined by
$$ W(\xi') = \sum_{\alpha=1}^k \log \frac{\Vol(\mathcal P_{\xi',\alpha})}{|\xi'|}.$$
This means that we take $\mathcal P_{\xi'}$ in \eqref{Stokes0} to be 
$$\mathcal P_{\xi',\alpha}:= \{p \in \mathcal C_{\xi,\alpha}\ |\ \langle p, \xi' \rangle = 1\}$$
and apply the subsequent computations of Proposition \ref{prop1} and \ref{prop2}.
For a path $\xi_t = \xi' + t\nu$, 
we obtain by Proposition \ref{prop1}, Proposition \ref{prop2} and Schwarz inequality,
$$ \left.\frac{d^2}{dt^2}W(\xi_t)\right|_{t=0}
\ge (m+1) \sum_{\alpha=1}^k 
\frac{\int_{\mathcal P_{\xi',\alpha}} \langle p,\nu \rangle^2 dp_1 \wedge \cdots \wedge dp_{m+1}}
{\Vol(\mathcal P_{\alpha,\xi'})}.$$
This shows that $W$ is strictly convex. This completes the proof of (1).

Suppose that we have a 
 critical point $\xi'$ at which the Minkowski sum
we have the Minkowski sum decomposition
$$ \mathcal P_{\xi'} = \mathcal P_{\xi',1} + \cdots + \mathcal P_{\xi',k}$$
holds. 
Then Proposition \ref{prop1}
shows that 
$$
\sum_{\alpha=1}^k \frac{\int_{\mathcal P_{\xi',\alpha}} \langle p-q,\nu \rangle d\sigma}{\Vol(\mathcal P_{\xi',\alpha})} = 0
$$
for any $\nu$ with $\langle q, \nu \rangle = 0$.
This implies that
the sum of the barycenters of $P_{\xi',\alpha}$'s lie at $q$, which is the origin $\bfo$. 
Since  the Minkowski sum decomposition 
$ \mathcal P_{\xi'} = \mathcal P_{\xi',1} + \cdots + \mathcal P_{\xi',k}$
corresponds to the decomposition of the positive basic first Chern class $c_1^B(S,\xi')$
$$ 2\pi c_1^B(S,\xi') = \gamma'_1 + \cdots + \gamma'_k$$
with respect to $\xi'$ and the corresponding Minkowski sum decomposition of the 
transverse moment map image 
$\sum_{\alpha=1}^k \frac1{m+1}\mathcal P'_\alpha$
as in \eqref{coupledS5},
this further implies that the sum of the barycenters of the transverse moment polytopes $P'_{\alpha}$'s 
for $\xi'$ lie at 
zero. It follows from \cite{FutakiZhang18}, Theorem 1.4, that there exist transverse coupled
K\"ahler-Einstein metrics $\omega'_\alpha$ in $\gamma'_\alpha$. This completes the proof of
(2). 

In the case of $k=1$, if we take 
$\mathcal P_{\xi,1} = \mathcal P_\xi$ then $\mathcal C_{\xi,1} = \mathcal C_\xi$
(which is indeed a convex polyhedral cone of the contact toric manifold and independent
of $\xi$). Hence
$$\mathcal P_{\xi',1} = \mathcal C_{\xi} \cap \mathcal P_{\xi'} = \mathcal P_{\xi'}.$$
As shown in (2), $W$ is a strictly convex function. 
$W$ is a proper function because as $\xi'$ tends to the boundary of $\Xi_\bfo$, $\mathcal P_{\xi'}$ tends to
plane passing through $\bfo$ parallel to a facet, and the volume tends to infinity. This completes the proof of (3).
\end{proof}

With fixed Reeb vector field $\xi$, any Reeb vector field $\xi' \in \mathfrak t^\ast$ such that $\langle \bfo, \xi' \rangle = 1$ determines a toric Sasaki structure satisfying the Calabi-Yau condition of the K\"ahler cone.
As was remarked in the introduction,
even if we assume 
$ c_1^B(S) = (\gamma_1 + \cdots + \gamma_k)/2\pi$
and 
$\mathcal P_\xi = \mathcal P_{\xi,1} + \cdots + \mathcal P_{\xi,k}$
we do not in general obtain
\begin{equation}\label{Mink1.1} 
\mathcal P_\xi' = \mathcal P_{\xi',1} + \cdots + \mathcal P_{\xi',k}
\end{equation}
for other $\xi' \in \Xi_\bfo$,
nor a decomposition of the basic first Chern class $c_1^B(S,\xi')$ with respect to $\xi'$ in the form
\begin{equation}\label{Mink2.1}
 2\pi c_1^B(S,\xi') = \gamma'_1 + \cdots + \gamma'_k.
 \end{equation}
 The failure of getting a Minkowski sum decomposition \eqref{Mink1.1} can be seen from the non-linearity of
 the $CR$ $f$-twist of Apostolov-Calderbank \cite{ApoCal21MA},
see also \cite{Viza de Souza21}. 
For this we use Lemma \ref{Sasaki2}.

If $\xi' \in \mathfrak t$ is another Reeb vector field then by Lemma \ref{Sasaki2} there is a positive
Killing potential $f$ of $\xi'$ with respect to $\xi$ satisfying \eqref{Sasaki3}.
This implies 
$$ \eta_{\xi'} = \eta_D(\xi')^{-1}\eta_D = \frac 1f \eta_\xi.$$
If $x^1, \cdots, x^n$ and $x^{\prime1}, \cdots , x^{\prime n}$ are affine coordinates in terms of a basis of $\mathfrak t$
on $\mathcal P_\xi$ and $\mathcal P_{\xi'}$ respectively such that $\bfo$ is $(0, \cdots, 0)$ in both of the coordinates
then
$$ x^{\prime i} = \frac{x^i}f.$$
Let $\widetilde{\mathcal P}_{\xi}$ and $\widetilde{\mathcal P}_{\xi,\alpha}$ be the $f$-twist of 
$\mathcal P_{\xi}$ and $\mathcal P_{\xi,\alpha}$. 
If $\bfx = \bfx_1 + \cdots + \bfx_k$ for some $\bfx_\alpha \in \mathcal P_{\xi,\alpha}$ the $f$-twist of 
$\bfx$ is 
\begin{eqnarray*}
\widetilde{\bfx} &=& \frac {\bfx} {f(\bfx)}\\
&=& \frac {\bfx_1 + \cdots + \bfx_k} {f(\bfx)}\\
&\neq& \sum_{i=1}^k  \frac {\bfx_i} {f(\bfx_i)}\\
&=& \widetilde{\bfx}_1 + \cdots + \widetilde{\bfx}_k
\end{eqnarray*}
The last inequality explains the failure of getting a 
Minkowski sum 
\begin{equation*}
\widetilde{\mathcal P}_\xi' =\widetilde{\mathcal P}_{\xi',1} + \cdots + \widetilde{\mathcal P}_{\xi',k}
\end{equation*}
by the $f$-twist.


\end{document}